\newtheorem{theorem}{Theorem}[section]
\newtheorem{lem}[theorem]{Lemma}
\newtheorem{lemma}[theorem]{Lemma}
\newtheorem{prop}[theorem]{Proposition}
\newtheorem{corol}[theorem]{Corollary}
\theoremstyle{definition}
\newtheorem{remark}[theorem]{Remark}
\newtheorem{definition}[theorem]{Definition}
\newtheorem{notation}[theorem]{Notation}
\numberwithin{equation}{section}
\title[Fullness of exceptional collections via stability conditions]{Fullness of exceptional collections via stability conditions -- A case study: the quadric threefold}
\author{Barbara Bolognese$^1$}
\address{$^1$Dipartimento di Matematica -- Università di Roma Tre}
\email{barbara.bolognese@uniroma3.it}
\author{Domenico Fiorenza$^2$}
\address{$^2$Dipartimento di Matematica -- Sapienza Universit\`a di Roma}
\email{fiorenza@mat.uniroma1.it}
\begin{document}
 
\begin{abstract}
A powerful tool of investigation of Fano varieties is provided by exceptional collections in their derived categories. Proving the fullness of such a collection is generally a nontrvial problem, usually solved on a case-by-case basis, with the aid of a deep understanding of the underlying geometry. Likewise, when an exceptional collection is not full, it is not straightforward to determine whether its ``residual'' category, i.e., its right orthogonal, is the derived category of a variety. We show how one can use the existence of Bridgeland stability condition these residual categories (when they exist) to address these problems. We examine  a simple case in detail: the quadric threefold $Q_3$ in $\mathbb{P}^{4}$.
We also give an indication how a variety of other classical results could be justified or re-discovered via this technique., e.g., the commutativity of the Kuznetsov component of the Fano threefold $Y_4$.  
\end{abstract}

\maketitle
\vspace{-0.5cm}
\section{Introduction}
Semiorthogonal decompositions, originally introduced by Bondal and Orlov\cite{bondal-orlov}, are one of the most insighful features triangulated categories can have. A classic example is the semiorthogonal decomposition produced via a full exceptional collection, i.e. via a collection of objects each of which generates a subcategory equivalent to the derived category of a point, interacting with each other with prescribed hom-vanishings and spanning the whole triangulated category. These exceptional objects behave like one-dimensional, simple generating blocks of their ambient category. Their existence is a specific feature of certain types of triangulated categories, notably the derived categories of some Fano varieties (see, e.g. \cite{Kuz08,Kuz18} and the references therein). 
Generally, when a triangulated category admits an exceptional collection, this collection is not full: it usually admits a semiorthogonal complement, its right orthogonal, sometimes called residual category or, when the triangulated category is in fact the derived category of a variety, Kuznetsov component (after \cite{Kuz15}). Kuznetsov components have been increasingly studied, as they somehow represent the non-trivial part of the derived categories they are embedded into, and have been seen and conjectured to encode subtle geometric information on their ambient varieties, notably (and conjecturally) their rationality properties \cite{Kuz08b}. 
\par
In the same flavor, Bridgeland stability conditions on the derived category of an algebraic variety $X$ are often used to investigate the geometric properties of moduli spaces of sheaves and complexes of sheaves on $X$, and their very existence is an intense object of study on threefolds and higher dimensional varieties \cite{Bri01,Bri03,BM12,BM13}. Recently, Bayer-Lahoz-Macrì-Stellari \cite{BLMS} have showed how to induce a stability condition on the Kuznetsov component of a projective variety from a weak stability condition on its hosting derived category, provided that certain  conditions are satisfied. These conditions are notably satisfied by Fano threefolds of Picard rank 1. 
\par
We illustrate how the existence of stability conditions on a Kuznetsov component automatically allows to prove certain results that, when already known, usually require case-by-case geometric techniques to be dealt with. Namely, 
showing that a given exceptional collection is full or, more generally, showing that a certain triangulated subcategory exhausts the  right orthogonal of an exceptional collection, are usually nontrivial problems due to the a priori possible presence of subcategories which are invisible to numerical detection: the so-called phantom subcategories \cite{gorchinskiy-orlov}. A stability condition, including a ``positivity condition'' for nonzero objects, notably forbids the presence of phantom subcategories (or at least, as we will show, of phantomic summands, which is enough for our purposes), thus remarkably simplifying proofs of fullness.
\par
In particular we will focus our attention on a simple example, while at the same time indicating how a few others can at least in principle be described in a similar fashion: the index 3 Fano threefold, i.e., a smooth quadric threefold $Q_3$ in $\mathbb{P}^4$. Quadrics are actually one of those few notable cases (among, e.g., projective spaces and Grassmannians) where exhibiting a full exceptional collection does not require somehow sophisticated techniques as stability conditions, see \cite{kapranov,sasha-quadrics}. We present $Q_3$ as a simple case study, to illustrate the effectiveness of the use of stability conditions (when they exist) in investigating fullness of exceptional collections. In particular,  
 we show how  in this case one rediscovers Kapranov's full exceptional collection $(S,\mathcal{O}_{Q_3},\mathcal{O}_{Q_3}(1),\mathcal{O}_{Q_3}(2))$, where $S$ is the spinor bundle over $Q_3$  \cite{kapranov}. 
 Among other possible applicatons, we sketch how one could recover the equivalence between the Kuznetsov component $\mathrm{Ku}(Y_4)$ of ${\mathcal{D}^b}(Y_4)$, where $Y_4$ is the index 2 Fano threefold  given by the complete intersection of two smooth generic quadric hypersurfaces in $\mathbb{P}^5$, and the derived category of the moduli space of spinor bundles on $Y_4$ \cite{bondal-orlov}, 
via the identification of this moduli space with a moduli space of Bridgeland-stable point-objects. Similarly, one sees how the the numerical condition $\chi(v,v)=0$ for a numerical classe $v$ of a cubic fourfold $W_4$ naturally shows up in ehibiting the equivalence between the Kuznetsov component $\mathrm{Ku}(W_4)$ and the derived category of a (possibly twisted) K3 surface \cite{addington-thomas,huybrechts,BLMNPS}.
\medskip

{\bf Acknowledgements.}
The authors wish to thank Arend Bayer and Alexander Kuznetsov, for useful comments on a first draft of this article.


\tableofcontents

\section{Notation and conventions}
We assume that the reader is familiar with the fundamental notions from the theory of stability conditions on triangulated categories and of semiorthogonal decompositions. See \cite{bondal-orlov,bayer,MS} for an introduction. In this section we will simply set up the necessary notation. 
\par
We will be working over the field $\mathbb{C}$ of complex numbers and we will denote the imaginary unit by $\sqrt{-1}$. By $(X,H)$ we will denote a primitively polarized, $n$-dimensional smooth projective variety $i\colon X\hookrightarrow \mathbb{P}^N$. For top dimensional cohomology classes of $X$, by a slight abuse of notation, we will simply write the class $\omega$ for the complex number $\int_X \omega$.
\par
We will denote the abelian category of coherent sheaves on $X$ by $\mathrm{Coh}(X)$, and its derived category by ${\mathcal{D}^b}(X)$. The Grothendieck and the numerical Grothendieck groups of ${\mathcal{D}^b}(X)$ will be denoted by $K(X)$ and by $K_{\mathrm{num}}(X)$, respetively. More generally, we will write $K(\mathcal{D})$ and $K_{\mathrm{num}}(\mathcal{D})$ to denote, respectively, the Grothendieck and numerical Grothendieck groups, of a numerically finite triangulated category $\mathcal{D}$, i.e., of a triangulated category $\mathcal{D}$ endowed with a Serre functor an such that for any two objects $E,F\in \mathcal{D}$ one has $\dim \mathrm{Hom}_{\mathcal{D}}(E,F[n])<+\infty$ for every $n\in \mathbb{Z}$ and $\dim \mathrm{Hom}_{\mathcal{D}}(E,F[n])=0$ for every $|n|>\!>0$.
\par
On ${\mathcal{D}^b}(X)$ we will consider the weak numerical stability condition $\sigma_H=(\mathrm{Coh}(X),\allowbreak Z_H)$, where $Z_H\colon K_{\mathrm{num}}(X)\to \mathbb{C}$ is defined by
\[
Z_H(E)=-H^{n-1}\mathrm{ch_1}(E)+\sqrt{-1}\,H^n\mathrm{ch}_0(E).
\]
For a nonzero object $E$ in $\mathrm{Coh}(X)$, the slope of $E$ with respect to the weak stability function $Z_H$ is
\[
\mu_H(E)=\frac{H^{n-1}\mathrm{ch_1}(E)}{H^n\mathrm{ch}_0(E)},
\]
where we set $\mu_H(E)=+\infty$ if $\mathrm{ch}_0(E)=0$.
\par
For any $\beta\in \mathbb{R}$, we denote by $\mathrm{Coh}_H^\beta(X)$ the heart of the $t$-structure on ${\mathcal{D}^b}(X)$ obtained by tilting the standard heart $\mathrm{Coh}(X)$ with respect to the torsion pair $(\mathrm{Coh}(X)_{\mu_H\leq \beta},\mathrm{Coh}(X)_{\mu_H> \beta})$, where
\[
\mathrm{Coh}(X)_{\mu_H\leq \beta}=\langle E\in \mathrm{Coh}(X)\colon\text{$E$ is $\sigma_H$-semistable with $\mu_H(E)\leq \beta$}\rangle
\]
\[
\mathrm{Coh}(X)_{\mu_H> \beta}=\langle E\in \mathrm{Coh}(X)\colon\text{$E$ is $\sigma_H$-semistable with $\mu_H(E)> \beta$}\rangle.
\]
For any $\alpha>0$, by $\sigma_{\alpha,\beta}=(\mathrm{Coh}_H^\beta(X),Z_{\alpha,\beta})$ we denote the weak stability condition on ${\mathcal{D}^b}(X)$ with heart $\mathrm{Coh}_H^{\beta}(X)$ and weak stability function
\begin{align}\label{eq:weak-stability-function}
Z_{\alpha, \beta}(E) &= -\int _X H^{n-2}\mathrm{ch}^{\beta+\sqrt{-1}\,\alpha}(E) 
\\
\notag &=\left(\frac{\alpha^2}{2}\mathrm{ch}_0^\beta(E)H^n\-\mathrm{ch}_2^\beta(E)H^{n-2}\right)+\sqrt{-1}\,\left(\alpha\mathrm{ch}_1^\beta(E) H^{n-1}\right)
\\
\notag &= \left(\frac{\alpha^2 - \beta^2}{2}\mathrm{ch}_0(E)H^n+ \beta \mathrm{ch}_1(E) H^{n-1}  -\mathrm{ch}_2(E) H^{n-2}\right)\\
\notag &\qquad\qquad\qquad+ \sqrt{-1}\,\left(-\alpha\beta \mathrm{ch}_0(E)H^n+\alpha \mathrm{ch}_1(E)H^{n-1}\right) ,
\end{align}
where as customary we write $\mathrm{ch}^{\gamma}(E)=e^{-\gamma H}\mathrm{ch}(E)$.
The associated slope, for a nonzero object $E$ in $\mathrm{Coh}_H^\beta(X)$, is
\[ 
\mu _{\alpha, \beta}(E)  = \frac{\mathrm{ch}_2(E)H^{n-2}- \beta \mathrm{ch}_1 H ^{n-1}+ \frac{\beta^2 - \alpha^2}{2}\mathrm{ch}_0(E)H^n}{\alpha \mathrm{ch}_1(E)H^{n-1}-\alpha\beta \mathrm{ch}_0(E)H^n} . \]
Finally, for any $\mu\in \mathbb{R}$, we denote by $\mathrm{Coh}_{\alpha,\beta}^\mu(X)$ the heart of the $t$-structure on ${\mathcal{D}^b}(X)$ obtained by tilting the heart $\mathrm{Coh}_H^\beta(X)$ with respect to the torsion pair $(\mathrm{Coh}^\beta(X)_{\mu_{\alpha,\beta}\leq \mu},\mathrm{Coh}^\beta(X)_{\mu_{\alpha,\beta}> \mu})$.
\begin{remark}\label{remark:four-cases}
For an object $E\in {\mathcal{D}^b}(X)$ which is at the same time $\sigma_H$-semistable and  $\sigma_{\alpha,\beta}$-semistable, the property of belonging to the doubly tilted heart $\mathrm{Coh}_{\alpha,\beta}^\mu(X)$ reduces to a pair of inequalities involving the slopes $\mu_{H}$ and $\mu_{\alpha,\beta}$, and the property of belonging to the standard heart $\mathrm{Coh}(X)$, of a suitable shift of $E$. Namely, for a $\sigma_H$- and $\sigma_{\alpha,\beta}$-semistable object $E$ in ${\mathcal{D}^b}(X)$ we have that $E\in \mathrm{Coh}_{\alpha,\beta}^\mu(X)$ precisely when one of the following four cases occurs:
\[
\begin{cases}
E\in \mathrm{Coh}(X), & \mu_H(E)>\beta, \quad \mu_{\alpha,\beta}(E)>\mu\\ \\
E[-1]\in \mathrm{Coh}(X),& \mu_H(E[-1])\leq \beta,\quad \mu_{\alpha,\beta}(E)>\mu\\ \\ 
E[-1]\in \mathrm{Coh}(X),& \mu_H(E[-1])> \beta,\quad \mu_{\alpha,\beta}(E[-1])\leq \mu\\ \\ 
E[-2]\in \mathrm{Coh}(X),& \mu_H(E[-2])\leq \beta,\quad \mu_{\alpha,\beta}(E[-1])\leq \mu\,.
\end{cases}
\]
\end{remark}

\begin{remark}\label{rem:in-codimension-3}
If the integral cohomology of $X$ is generated by $H$ in degree $\leq 4$, one sees that a nonzero object $E$ in $\mathrm{Coh}_{\alpha,\beta}^\mu(X)$ with $Z_{\alpha,\beta}(X)=0$ is a coherent sheaf on $X$ supported in codimension at least 3. This is easy and well known, but as we were not able to locate a completely explicit proof in the literature we provide it here for the reader's convenience. If $E\in \mathrm{Coh}_{\alpha,\beta}^\mu(X)$, then $E$ fits into a distinguished triangle
\[
E_{\leq \mu}[1]\to E\to E_{>\mu}\xrightarrow{+1} E_{\leq \mu}[2]
\]
in $\mathcal{D}^b(X)$, with $E_{> \mu}\in \mathrm{Coh}^\beta_H(X)_{\mu_{\alpha,\beta}>\mu}$ and $E_{\leq\mu}\in \mathrm{Coh}^\beta_H(X)_{\mu_{\alpha,\beta}\leq\mu}$. If $Z_{\alpha,\beta}(E)=0$, this gives $Z_{\alpha,\beta}(E_{\geq \mu})=Z_{\alpha,\beta}(E_{<0})=0$, as this is the only possible common value for $Z_{\alpha,\beta}(E_{\geq \mu})$ and $Z_{\alpha,\beta}(E_{<0})$. But $Z_{\alpha,\beta}(E_{\leq \mu})$ can not be $0$ unless $E_{\leq \mu}=0$, so $E$ is an object in $\mathrm{Coh}^\beta_H(X)$ with $Z_{\alpha,\beta}(E)=0$. Now consider the distinguished triangle
\[
F_{\leq \beta}[1]\to E\to F_{>\beta}\xrightarrow{+1} F_{\leq \beta}[2]
\]
in $\mathcal{D}^b(X)$, with $F_{> \beta}\in \mathrm{Coh}(X)_{\mu_{H}>\beta}$ and $F_{\leq\beta}\in \mathrm{Coh}(X)_{\mu_{H}\leq\beta}$. As $F_{> \beta},F_{\leq \beta}[1]$ in $\mathrm{Coh}^\beta(X)$, we have 
\[\mathrm{Im}(Z_{\alpha,\beta}(F_{>\beta}))\geq 0,\quad \mathrm{Im}(Z_{\alpha,\beta}(F_{\leq \beta}[1]))\geq 0.
\]
Additivity of $\mathrm{Im}(Z_{\alpha,\beta}))$ then gives $\mathrm{Im}(Z_{\alpha,\beta}(F_{>\beta}))=\mathrm{Im}(Z_{\alpha,\beta}(F_{\leq \beta}[1]))=0$, and so
\[\mathrm{Re}(Z_{\alpha,\beta}(F_{>\beta})\leq 0,\quad \mathrm{Re}(Z_{\alpha,\beta}(F_{\leq \beta}[1])\leq 0.
\]
Additivity again gives $\mathrm{Re}(\allowbreak Z_{\alpha,\beta}(F_{>\beta}))=\mathrm{Re}(Z_{\alpha,\beta}(F_{\leq \beta}[1]))=0$, and so $\mathrm{Re}(\allowbreak Z_{\alpha,\beta}(\allowbreak F_{\leq \beta})\allowbreak =0$.
Assume $F_{\leq \beta}$ is nonzero. Since $F_{\leq \beta}$ has finite slope, it must have positive rank. This implies that, if $F_{\leq \beta}$ has a slope-semistable factor with slope strictly less than $\beta$ we get $\mu_H(F_{\leq \beta})<\beta$ and so $\mathrm{Im}(Z_{\alpha,\beta}(F_{\leq \beta}))=\mathrm{ch}^\beta_1(F_{\leq \beta})<0$, a contradiction. Hence, $F_{\leq \beta}$ is actually a torsion-free slope semistable sheaf with slope $\mu_H(F_{\leq\beta})=\beta$. Using this we compute
\begin{align*}
  \mathrm{Re}(Z_{\alpha,\beta}(F_{\leq \beta}))&=\frac{\alpha^2(\mathrm{ch}_0(F_{\leq \beta})H^n)^2+\Delta_H(F_{\leq \beta})}{2(\mathrm{ch}_0(F_{\leq \beta})H^n)} 
  \\
  &\geq \frac{\alpha^2}{2}\mathrm{ch}_0(F_{\leq \beta})H^n>0,
\end{align*}
a contradiction. Here, for an object $F$ in $\mathcal{D}^b(X)$ we have written $\Delta_H(F)$ for the $H$-discriminant 
\[
\Delta_H(F)=(\mathrm{ch}_1(F)H^{n-1})^2-2(\mathrm{ch}_0(F)H^n)(\mathrm{ch}_2(F)H^{n-1})
\]
and we have used the Bogomolov-Gieseker-type inequality from \cite[Theorem 3.5]{BMS}: for $F$ a torsion-free slope semistable sheaf one has $\Delta_H(F)\geq 0$. The above contradiction shows that $E=F_{>\beta}$ and so $E$ is a coherent sheaf with $Z_{\alpha,\beta}(E)=0$. Looking at the real part of $Z_{\alpha,\beta}(E)$ we find
\[
\frac{\alpha^2 - \beta^2}{2}\mathrm{ch}_0(E)H^n+ \beta \mathrm{ch}_1(E) H^{n-1}  -\mathrm{ch}_2(E) H^{n-2}
\]
for any $\alpha>0$ and any $\beta\in \mathbb{R}$. This implies $\mathrm{ch}_{\leq 2}(E)=0$ and so, as $E$ is a coherent sheaf, that $E$ is supported in codimension at least 3.
\end{remark}

\section{The Kuznetsov component of the quadric threefold and its Serre functor}
Let $i\colon Q_3\hookrightarrow \mathbb{P}^4$ be a smooth quadric. The derived category of $\mathbb{P}^4$ has a semiorthogonal decomposition induced by a full exceptional collection
\[
{\mathcal{D}^b}(\mathbb{P}^4)=\langle \mathcal{O}_{\mathbb{P}^4}, \mathcal{O}_{\mathbb{P}^4}(1),\mathcal{O}_{\mathbb{P}^4}(2),\mathcal{O}_{\mathbb{P}^4}(3),\mathcal{O}_{\mathbb{P}^4}(4)\rangle,
\]
see \cite{beilinson}. The residual category $\mathrm{Ku}(Q_3)$ is defined as the right orthogonal to the exceptional collection $(\mathcal{O}_{Q_3},\mathcal{O}_{Q_3}(1),\mathcal{O}_{Q_3}(2))$ in ${\mathcal{D}^b}(Q_3)$.
See \cite{Kuz15} for details, where this residual category would be denoted as $\mathcal{A}_{Q_3}$. Here we use the  notation $\mathrm{Ku}(Q_3)$ as these residual categories are most commonly known as \emph{Kuznetsov components}. 
\begin{remark}\label{rem:admissibility}
If one has a semi-orthogonal decomposition $\mathcal{D}=\langle \mathcal{D}_1, \mathcal{D}_2\rangle$, then the subcategory $\mathcal{D}_1$ is \emph{left admissible}: the left adjoint $\iota^L_1$ to the inclusion functor $\iota _1: \mathcal{D}_1 \hookrightarrow \mathcal{D}$ is simply given by the projection functor $\tau_1\colon \mathcal{D}\to \mathcal{D}_1$ associated with the semiorthogonal decomposition. Remarkably, the converse is true: if $\mathcal{C}\hookrightarrow \mathcal{D}$ is a left admissible subcategory, then $\mathcal{C}$ is the left part of a semiorthogonal decomposition $\mathcal{D}=\langle \mathcal{C},{}^\perp\mathcal{C}\rangle$, where 
\[
{}^\perp\mathcal{C}=\{X\in \mathcal{D}\,"\, \mathrm{Hom}_{\mathcal{D}}(X,Y)=0, \quad \forall Y\in \mathcal{C}\}
\]
is the \emph{left-orthogonal} to $\mathcal{C}$; see \cite{bondal}. Dually, one has that a triangulated subcategory $\mathcal{C}\hookrightarrow \mathcal{D}$ is right admissible if and only if it is the right part of a semiorthogonal decomposition $\mathcal{D}=\langle \mathcal{C}^\perp,\mathcal{C}\rangle$. A subcategory $\mathcal{C}\hookrightarrow{\mathcal{D}}$ which is at the same time left and right admissible is simply called \emph{admissible}. In this case one has \emph{two} semiorthogonal decompositions $\mathcal{D}=\langle \mathcal{C},{}^\perp\mathcal{C}\rangle=\langle \mathcal{C}^\perp,\mathcal{C}\rangle$. Notice that one generally has ${}^\perp\mathcal{C}\neq {\mathcal{C}}^\perp$.

\end{remark}

\begin{remark}\label{rem:admissibility-2}

If the category $\mathcal{D}$ admits a Serre functor $\mathbb{S}$ and $\mathcal{D}_1\subseteq \mathcal{D}$ is left admissible, then also $\mathcal{D}_1$ has a Serre functor $\mathbb{S}_1$. More precisely, the inverse Serre functor on $\mathcal{D}_1$ is given by
\[
\mathbb{S}_{1}^{-1}=\tau_1\circ \mathbb{S}^{-1}.
\]
The existence of Serre functors both on $\mathcal{D}$ and $\mathcal{D}_1$ immediatley implies that $\mathcal{D}$ is also right admissible, and so admissible: the right adjoint $\iota^R$ to the inclusion functor $\iota _1:$ is given by the composition
\[ \iota_1^R = \mathbb{S}_1\circ \tau _1 \circ \mathbb{S}^{-1} \]
Dual considerations apply to the subcategory $\mathcal{D}_2$.

\end{remark}
\begin{remark}\label{rem:admissible-3}

Let $\mathcal{A}\subseteq \mathcal{B}\subseteq\mathcal{C}$ be an inclusion of triangulated subcategories. It is easy to see that if both $\mathcal{A}$ and $\mathcal{B}$ are admissible subcategories of $\mathcal{C}$, then $\mathcal{A}$ is an admissible subcategory of $\mathcal{B}$.

\end{remark}
\begin{remark}\label{rem:numerical-semiorthogonality}

Semiorthogonal decompositions have an immediate numerical counterpart. Assume
 $\mathcal{D}$ is a numerically finite triangulated category endowed with a Serre functor. Then a semiorthogonal decomposition $\mathcal{D}=\langle \mathcal{D}_1,\mathcal{D}_2\rangle$ induces a semiorthogonal decomposition 
\[
K_{\mathrm{num}}(\mathcal{D})=K_{\mathrm{num}}(\mathcal{D}_1)\oplus K_{\mathrm{num}}(\mathcal{D}_2).
\]
By this one means that one has a direct sum decomposition $K_{\mathrm{num}}(\mathcal{D})=K_{\mathrm{num}}(\mathcal{D}_1)\oplus K_{\mathrm{num}}(\mathcal{D}_2)$ of free $\mathbb{Z}$-modules, such that $K_{\mathrm{num}}(\mathcal{D}_1)$ is the right orthogonal to $K_{\mathrm{num}}(\mathcal{D}_2)$ with respect to the Euler pairing $\chi_{\mathcal{D}}$.
Moreover $(K_{\mathrm{num}}(\mathcal{D}_i),\chi_{\mathcal{D}_i})\hookrightarrow (K_{\mathrm{num}}(\mathcal{D}),\chi_{\mathcal{D}})$ are injective morphisms of free $\mathbb{Z}$-modules endowed with nondegenerate bilinear pairings.

\end{remark}

\begin{remark}
The semiorthogonal decomposition \[{\mathcal{D}^b}(Q_3)=\langle \mathrm{Ku}(Q_3),\mathcal{O}_{Q_3},\mathcal{O}_{Q_3}(1),\mathcal{O}_{Q_3}(2)\rangle\] is derived from the rectangular Lefschetz decomposition \[{\mathcal{D}^b}(\mathbb{P}^4)=\langle \mathcal{O}_{\mathbb{P}^4}, \mathcal{O}_{\mathbb{P}^4}(1),\mathcal{O}_{\mathbb{P}^4}(2),\mathcal{O}_{\mathbb{P}^4}(3),\mathcal{O}_{\mathbb{P}^4}(4)\rangle\] together with the spherical functor $i_*\colon {\mathcal{D}^b}(Q_3)\to {\mathcal{D}^b}(\mathbb{P}^4)$ induced by the divisorial embedding $i\colon Q_3\hookrightarrow \mathbb{P}^4$. Again, see \cite{Kuz15} for details. For later use, we explicitly note that the rectangular Lefschetz decomposition we are considering on ${\mathcal{D}^b}(\mathbb{P}^4)$ has lenght $m=5$ and that in going from the semiorthogonal decomposition of ${\mathcal{D}^b}(\mathbb{P}^4)$ to that of ${\mathcal{D}^b}(Q_3)$ we `lose' $d=2$ terms from the exceptional collection.  
\end{remark}

Following \cite{Kuz15} (see also the exposition in \cite{MS18}) one can easily determine a more explicit expression for the Serre functor $\mathbb{S}_{{\mathrm{Ku}}(Q_3)}$ with respect to the somehow implicit one given in Remark \ref{rem:admissibility}. We will use it in Section \ref{section:point-objects} to show that the Serre functor $\mathbb{S}_{\mathrm{Ku}(Q_3)}$ acts as the identity functor on the spinor bundle of $Q_3$.

\begin{notation}
We write 
\[
\mathrm{O}_{\mathrm{Ku}(Q_3)}\colon \mathrm{Ku}(Q_3)\to \mathrm{Ku}(Q_3)
\]
for the \emph{rotation} (or \emph{degree shift}) endofunctor of $\mathrm{Ku}(Q_3)$ given by
\[
\mathrm{O}_{\mathrm{Ku}(Q_3)}\colon E\mapsto \tau_{\mathrm{Ku}(Q_3)}(E(1)).
\]
\end{notation}
In our situation, $m=5$ an $d=2$, and so $m-d=3$; therefore \cite[Lemma 2.40]{MS18} gives that for any $0\leq n\leq 3$ one has $\mathrm{O}_{\mathrm{Ku}(Q_3)}^n(E)=\tau_{\mathrm{Ku}(Q_3)}(E(n))$. As an immediate consequence we get the following particular case of \cite[Example 2.2]{kuznetsov-smirnov}.

\begin{lemma}\label{lemma:inverse-serre} The Serre functor on the residual category $\mathrm{Ku}(Q_3)$ is given by
\[
\mathbb{S}_{\mathrm{Ku}(Q_3)}=\mathrm{O}^{-3}_{\mathrm{Ku}(Q_3)}[3],
\]
\end{lemma}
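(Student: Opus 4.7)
The plan is to compose the inverse Serre functor formula from Remark \ref{rem:admissibility-2} with the explicit description of $\mathbb{S}_{Q_3}$ on the ambient category and then recognize the resulting projection as a power of the rotation endofunctor.

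First I would recall that $Q_3$ is a Fano threefold of index $3$, i.e. $\omega_{Q_3}\cong \mathcal{O}_{Q_3}(-3)$, so the Serre functor on $\mathcal{D}^b(Q_3)$ is $\mathbb{S}_{Q_3}(E)=E(-3)[3]$ and its inverse is $\mathbb{S}_{Q_3}^{-1}(E)=E(3)[-3]$. Applying Remark \ref{rem:admissibility-2} with $\mathcal{D}=\mathcal{D}^b(Q_3)$ and $\mathcal{D}_1=\mathrm{Ku}(Q_3)$, we get, for every $E\in \mathrm{Ku}(Q_3)$,
\[
\mathbb{S}_{\mathrm{Ku}(Q_3)}^{-1}(E)=\tau_{\mathrm{Ku}(Q_3)}\bigl(\mathbb{S}_{Q_3}^{-1}(E)\bigr)=\tau_{\mathrm{Ku}(Q_3)}\bigl(E(3)\bigr)[-3].
\]

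Next I would invoke the identity $\mathrm{O}^{n}_{\mathrm{Ku}(Q_3)}(E)=\tau_{\mathrm{Ku}(Q_3)}(E(n))$ for $0\leq n\leq m-d=3$, as recalled just before the lemma statement from \cite[Lemma 2.40]{MS18}. Specializing to $n=3$ yields $\tau_{\mathrm{Ku}(Q_3)}(E(3))=\mathrm{O}^3_{\mathrm{Ku}(Q_3)}(E)$, and hence
\[
\mathbb{S}_{\mathrm{Ku}(Q_3)}^{-1}(E)=\mathrm{O}^3_{\mathrm{Ku}(Q_3)}(E)[-3].
\]

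Finally, inverting both sides (and using that the shift $[-3]$ is its own inverse up to sign swap, so $[-3]^{-1}=[3]$), I would obtain the claimed equality
\[
\mathbb{S}_{\mathrm{Ku}(Q_3)}=\mathrm{O}^{-3}_{\mathrm{Ku}(Q_3)}[3].
\]
Since all steps are direct applications of results quoted in the preceding remarks, there is no real obstacle here; the only mildly nontrivial point is verifying that one lies in the valid range $0\leq n\leq m-d$ so that the clean formula $\mathrm{O}^n=\tau\circ(-\otimes\mathcal{O}(n))$ applies, which is precisely the numerical coincidence $m-d=3=\mathrm{ind}(Q_3)$ reflecting the fact that the rectangular Lefschetz decomposition of $\mathcal{D}^b(\mathbb{P}^4)$ is long enough to accommodate a full twist by $\mathcal{O}_{Q_3}(3)$.
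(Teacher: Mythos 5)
Your proposal is correct and follows essentially the same route as the paper: compute $\mathbb{S}_{\mathrm{Ku}(Q_3)}^{-1}$ via the projection formula $\tau_{\mathrm{Ku}(Q_3)}\circ\mathbb{S}_{Q_3}^{-1}$, identify $\tau_{\mathrm{Ku}(Q_3)}(E(3))$ with $\mathrm{O}^3_{\mathrm{Ku}(Q_3)}(E)$ using the range $0\le n\le m-d=3$, and invert. (You cite Remark~\ref{rem:admissibility-2} where the paper's proof cites Remark~\ref{rem:admissibility}; your citation is actually the more precise one, since the formula $\mathbb{S}_1^{-1}=\tau_1\circ\mathbb{S}^{-1}$ appears there.)
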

\begin{proof}
The inverse Serre functor on ${\mathcal{D}^b}(Q_3)$ is $\mathbb{S}_{Q_3}^{-1}(E)=E(3)[-3]$ and so, by Remark \ref{rem:admissibility}, $\mathbb{S}_{\mathrm{Ku}(Q_3)}^{-1}(E)=\tau_{\mathrm{Ku}(Q_3)}(E(3))[-3]=\mathrm{O}_{\mathrm{Ku}(Q_3)}^3(E)[-3]$. 
\end{proof}

\section{Point objects and numerical point objects}
We now describe the numerical Grothendieck group of the Kuznetsov component $\mathrm{Ku}(Q_3)$ and determine an even codimensional numerical point object, i.e., a primitive eigenvector for the numerical action of the Serre functor on it. We will see in Section \ref{section:point-objects} that this numerical point object comes from an actual point object.

\bigskip

If $\mathcal{A}$ is a numerically finite triangulated category with a Serre functor $\mathbb{S}$, then $\mathbb{S}$ induces an isometry of $\mathbb{Z}$-modules endowed with bilinear pairings
\[
\mathbb{S}_{\mathrm{num}}\colon (K_{\mathrm{num}}(\mathcal{A}),\chi_{\mathcal{A}})\to (K_{\mathrm{num}}(\mathcal{A}),\chi_{\mathcal{A}}).
\]

\begin{definition}
Let $\mathcal{A}$ be a numerically finite triangulated category with a Serre functor $\mathbb{S}$. Let $[d]\in \mathbb{Z}/2\mathbb{Z}$. A numerical class $[E]$ in $K_{\mathrm{num}}(\mathcal{A})$ is called a codimension $[d]$ \emph{numerical point object} if
\[
\mathbb{S}_{\mathrm{num}}([E])=(-1)^{[d]}[E].
\]
\end{definition}
The definition of numerical point object is motivated by Bondal-Orlov's definition of point object in a triangulated category with a Serre funtor, which we recall below.
\begin{definition}[\cite{bondal-orlov}, Definition 4.1]\label{def:BO-point-objects}
Let $\mathcal{A}$ be a triangulated category with a Serre functor $\mathbb{S}$, and let $d\in \mathbb{Z}$. An object $E$ in $\mathcal{A}$ is called a {\it codimension $d$ point object} if
\begin{enumerate}
    \item $\mathrm{Hom}_{\mathcal{A}}(E,E)=\mathbb{C}$;
    \item $\mathrm{Hom}_{\mathcal{A}}(E,E[n])=0$ for every $n<0$;
    \item $\mathbb{S}_{\mathcal{A}}(E)\cong E[d]$.
\end{enumerate}
\end{definition}
Since the shift functor acts as the multiplication by $-1$ on $K_{\mathrm{num}}(\mathcal{A})$, one immediately sees that if $\mathcal{A}$ is numerically finite and $E$ is a codimension $d$ point object in $\mathcal{A}$, then the numerical class $[E]$ is a codimension $[d]$ numerical point object in $K_{\mathrm{num}}(\mathcal{A})$.

\bigskip
With these premises, we can now look for numerical point ojects in the  Kuznetsov component $\mathrm{Ku}(Q_3)$. We will see in the next section that these numerical point objects come from actual point objects.
The semiorthogonal decomposition ${\mathcal{D}^b}(Q_3)=\langle \mathrm{Ku}(Q_3),\mathcal{O}_{Q_3},\mathcal{O}_{Q_3}(1),\mathcal{O}_{Q_3}(2)\rangle$ induces a semiorthogonal decomposition at the level of numerical Grothendieck groups, with respect to the (non-symmetric) Euler pairing. Thus $K_{\mathrm{num}}(\mathrm{Ku}(Q_3))$ is naturally identified with the right orthogonal to the three Chern vectors $\mathrm{ch}(\mathcal{O_{Q_3}})=1$, $\mathrm{ch}(\mathcal{O}_{Q_3}(1))=e^H$ and $\mathrm{ch}(\mathcal{O}_{Q_3}(2))=e^{2H}$ in 
\[
K_{\mathrm{num}}(Q_3)\xrightarrow[\raisebox{4pt}{$\sim$}]{\mathrm{ch}} \mathbb{Z}\oplus \mathbb{Z}H\oplus \mathbb{Z}\frac{H^2}{2}\oplus \mathbb{Z}\frac{H^3}{12}\cong\mathbb{Z}\oplus \mathbb{Z}\oplus \mathbb{Z}\frac{1}{2}\oplus \mathbb{Z}\frac{1}{12} 
\]
(see \cite{Fritzsche,Kuz08}), with respect to the pairing induced by the Euler form $\chi$ on ${\mathcal{D}^b}(Q_3)$. The Todd class of $Q_3$ is 
\[
\mathrm{td}_{Q_3}=1+\frac{3}{2}H+\frac{13}{12}H^2+\frac{1}{2}H^3,
\]
hence we see that the matrix representing the Euler pairing with respect to the  $\mathbb{Q}$-basis $\{1,H,H^2,H^3\}$ of $K_{\mathrm{num}}(Q_3)\otimes \mathbb{Q}$
is
\[
\left(
\begin{matrix}
      1/2 & 13/12  &  3/2  &    1\\
-13/12 &  -3/2  &   -1  &    0\\
   3/2  &    1  &    0  &    0\\
    -1  &    0  &    0  &    0
\end{matrix}
\right)
\]
The right orthogonal to $1,e^H,e^{2H}$ is therefore determined by the equation
\[
\left(
\begin{matrix}
      1 & 0  &  0  &    0\\
1 &  1  &   1/2  &    1/6\\
1  & 2  &    2  &    4/3
\end{matrix}
\right)
\left(
\begin{matrix}
      1/2 & 13/12  &  3/2  &    1\\
-13/12 &  -3/2  &   -1  &    0\\
   3/2  &    1  &    0  &    0\\
    -1  &    0  &    0  &    0
\end{matrix}
\right)
\left(\begin{matrix}
a_0\\
a_1\\
a_2\\
a_3
\end{matrix}
\right)
=0
\]
from which one immediately sees that a solution $(a_0,a_1,a_2,a_3)$ in $K_{\mathrm{num}}(Q_3)$ must be an integer multiple of the primitive lattice vector $(2,-1,0,\frac{1}{12})$. In other words, if $K_{\mathrm{num}}(\mathrm{Ku}(Q_3))$ is generated by the Chern character of an object $S$, one must have $\mathrm{ch}(S)=2-H+\frac{1}{12}H^3$. As we are going to see, this is precisely the Chern character of the spinor bundle on $Q_3$.
\begin{lemma}
The numerical action of $\mathbb{S}_{\mathrm{Ku}(Q_3)}$ on $K_{\mathrm{num}}(Q_3)$ is the identity. In particular, via the isomorphism $K_{\mathrm{num}}(Q_3)\xrightarrow[\raisebox{4pt}{$\sim$}]{\mathrm{ch}} \mathbb{Z}\oplus \mathbb{Z}H\oplus \mathbb{Z}\frac{H^2}{2}\oplus \mathbb{Z}\frac{H^3}{12}$ the lattice vector $2-H+\frac{1}{12}H^3$ is an even dimensional numerical point object.
\end{lemma}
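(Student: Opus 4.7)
The plan is a direct calculation using the explicit description of the Serre functor from Lemma \ref{lemma:inverse-serre}. The preceding computation already identifies $K_{\mathrm{num}}(\mathrm{Ku}(Q_3))$ with the rank-one lattice $\mathbb{Z}\cdot v$, where $v = 2 - H + \tfrac{1}{12}H^3$. Since $\mathbb{S}_{\mathrm{num}}$ is an isometry for the Euler pairing and the lattice is of rank one with a nondegenerate pairing, $\mathbb{S}_{\mathrm{num}}$ must act as multiplication by $\pm 1$. Thus the entire content of the lemma reduces to determining this single sign.

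By Lemma \ref{lemma:inverse-serre}, $\mathbb{S}_{\mathrm{Ku}(Q_3)} = \mathrm{O}_{\mathrm{Ku}(Q_3)}^{-3}[3]$. The shift $[3]$ acts on $K_{\mathrm{num}}$ as $(-1)^3 = -1$; on the rank-one lattice, $\mathrm{O}_{\mathrm{num}}^{-3}$ acts with the same sign as $\mathrm{O}_{\mathrm{num}}$, since any sign in $\{\pm 1\}$ is self-inverse and invariant under odd powers. Consequently, proving that $\mathbb{S}_{\mathrm{num}}$ is the identity amounts to showing $\mathrm{O}_{\mathrm{num}}(v) = -v$.

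To compute $\mathrm{O}_{\mathrm{num}}(v)$, I would use the numerical semiorthogonal decomposition from Remark \ref{rem:numerical-semiorthogonality}: $\mathrm{O}_{\mathrm{num}}(v)$ is the unique integer multiple $k\cdot v$ of $v$ for which
\[
e^H\cdot\mathrm{ch}(S) \;-\; k\cdot\mathrm{ch}(S) \;\in\; \mathrm{span}_{\mathbb{Q}}\bigl(\mathrm{ch}(\mathcal{O}_{Q_3}),\,\mathrm{ch}(\mathcal{O}_{Q_3}(1)),\,\mathrm{ch}(\mathcal{O}_{Q_3}(2))\bigr).
\]
Expanding $e^H\cdot v$ and matching coefficients in the basis $\{1,H,H^2,H^3\}$ produces a $4\times 4$ linear system for $(k,a,b,c)$. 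The $H^2$-equation forces $b = -4c$, after which the $H$- and $H^3$-equations force $c = 0$ and $k = -1$ (the $H^0$-equation then fixes $a = 4$). Equivalently, $e^H\cdot\mathrm{ch}(S) = -\mathrm{ch}(S) + 4\,\mathrm{ch}(\mathcal{O}_{Q_3})$, so $\mathrm{O}_{\mathrm{num}}(v) = -v$. Combining with the previous paragraph, $\mathbb{S}_{\mathrm{num}}(v) = v$, i.e., $\mathbb{S}_{\mathrm{num}} = \mathrm{id}$; and since $v = (-1)^{[0]}v$, the class $v$ is by definition an even-codimensional numerical point object.

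There is no serious obstacle here — the argument is pure linear algebra once Lemma \ref{lemma:inverse-serre} and the rank-one description of $K_{\mathrm{num}}(\mathrm{Ku}(Q_3))$ are in place. The only thing requiring a little care is the sign bookkeeping: distinguishing $\mathrm{O}$ from $\mathrm{O}^{-1}$, and correctly combining the $(-1)^3$ from the shift with the $\mathrm{O}_{\mathrm{num}}^{-3}$ contribution, so as not to produce an extra sign error at the end.
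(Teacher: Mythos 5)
Your proposal is correct and the computation checks out: indeed $e^H\cdot(2-H+\tfrac{1}{12}H^3) = 2+H-\tfrac{1}{12}H^3 = -\bigl(2-H+\tfrac{1}{12}H^3\bigr)+4$, so $\mathrm{O}_{\mathrm{num}}(v)=-v$, and combining with the shift $[3]$ and $\mathrm{O}_{\mathrm{num}}^{-3}=\mathrm{O}_{\mathrm{num}}$ on a rank-one lattice yields $\mathbb{S}_{\mathrm{num}}=\mathrm{id}$. However, your route is genuinely different from, and noticeably heavier than, the paper's. The paper does not invoke Lemma \ref{lemma:inverse-serre} or any Chern character arithmetic at all. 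Instead it observes that $K_{\mathrm{num}}(\mathrm{Ku}(Q_3))\cong\mathbb{Z}$ already forces the numerical Serre action to be $\pm\mathrm{id}$ (as you also note), and then rules out $-\mathrm{id}$ purely formally: the Serre duality identity $\chi(v,\mathbb{S}_{\mathrm{num}}v)=\chi(v,v)$ would, under $\mathbb{S}_{\mathrm{num}}=-\mathrm{id}$, give $\chi(v,v)=-\chi(v,v)$, hence $\chi(v,v)=0$, contradicting nondegeneracy of the Euler pairing on the rank-one lattice. That two-line argument buys independence from the explicit description of $\mathbb{S}_{\mathrm{Ku}(Q_3)}$ as a rotation functor, and it generalizes to any rank-one numerical Grothendieck group with nondegenerate Euler form. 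Your approach, while more work, has the modest merit of actually exhibiting the relation $e^H\cdot\mathrm{ch}(S)=-\mathrm{ch}(S)+4\,\mathrm{ch}(\mathcal{O}_{Q_3})$, which is the numerical shadow of the short exact sequence $0\to S\to\mathcal{O}^4_{Q_3}\to S(1)\to 0$ used later in the categorical proof of Lemma \ref{lemma:S-as-point-object} — so it foreshadows that argument. If you keep your version, do state explicitly that you are projecting $e^H\cdot v$ onto $K_{\mathrm{num}}(\mathrm{Ku}(Q_3))$ along $K_{\mathrm{num}}(\langle\mathcal{O},\mathcal{O}(1),\mathcal{O}(2)\rangle)$, which is legitimate by Remark \ref{rem:numerical-semiorthogonality}, since $\tau_{\mathrm{Ku}(Q_3)}$ descends to exactly that projection.
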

\begin{proof}
Since $K_{\mathrm{num}}(\mathrm{Ku}(Q_3))\cong \mathbb{Z}$, the numerical action of the Serre functor has necessarily to be the identity or minus the identity. From the identity
\[
\chi^{}_{\mathrm{Ku}(Q_3)}(v,\mathbb{S}_{\mathrm{Ku}(Q_3);\mathrm{num}}v)=\chi^{}_{\mathrm{Ku}(Q_3)}(v,v)
\]
and the nondegeneracy of $\chi_{\mathrm{Ku}(Q_3)}$ we see that $\mathbb{S}_{\mathrm{Ku}(Q_3);\mathrm{num}}=\mathrm{id}_{K_{\mathrm{num}}(\mathrm{Ku}(Q_3))}.$
\end{proof}
\begin{remark}
 In higher rank situations,  the existence of numerical point objects in the Kuznetsov component is a nontrivial requirement. For instance, for index 2 Fano threefolds with Picard rank 1 this requirement singles out $Y_4$ as the only case where odd codimensional numerical point objects exist, and the double covering $Y_2$ of $\mathbb{P}^3$ ramified over a quadric as only case with even codimensional numerical point objects. Not surprisingly, the numerical point object for $Y_4$ is the Chern character $2-1H+\frac{1}{12}H^3$ of the spinor bundles. On the other hand, every primitive vector in $K_{\mathrm{num}}(\mathrm{Ku}(Y_2))$ is an even codimensional point object. Indeed, the Serre 
functor of $\mathrm{Ku}(Y_2)$ is the composition of the shift by 2 with the involution $\tau$ of $Y_2$ (see \cite[Corollary 4.6]{Kuz15}) and the generator of the Picard group $\mathrm{Pic}(Y_2)$ is pulled back from $\mathbb{P}^3$, and so is $\tau$-invariant (see, e.g., \cite{welters}).
\end{remark}

\section{Spinor bundles as point objects}\label{section:point-objects}
On an odd dimensional smooth quadric hypersurface $Q_{2m+1}\hookrightarrow \mathbb{P}^{2m+2}$ one has a distingushed rank $2^m$ vector bundle, induced by the spinor representation of $\mathfrak{so}(2m+3;\mathbb{C})$ associated with the quadratic form defining the quadric. This vector bundle is called the \emph{spinor bundle} and will be denoted as $S$. Similarly,
on an even dimensional smooth quadric $Q_{2m}\hookrightarrow \mathbb{P}^{2m+1}$ one has two nonisomorphic rank $2^{m-1}$ spinor bundles $S^+,S^-$, induced by the two half-spin representations of $\mathfrak{so}(2m;\mathbb{C})$. See \cite{ottaviani,sasha-perry} for details. In what follows we will make use of various elementary properties of spinor bundles on odd quadrics. We point the reader towards \cite{ottaviani} for complete statements and proofs.

\begin{lemma}[\cite{ottaviani}, Remark 2.9]
The Chern character of the spinor bundle $S$ on the quadric threefold $Q_3$ is 
\[
\mathrm{ch}(S)=2-H+\frac{1}{12}H^3.
\]
In particular, we have $K_{\mathrm{num}}(\mathrm{Ku}(Q_3))=\mathbb{Z}[S]$, where $[S]$ is the numerical class of $S$.
\end{lemma}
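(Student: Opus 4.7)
The plan is to derive the Chern character from a fundamental short exact sequence for the spinor bundle, and then read off the generator of $K_{\mathrm{num}}(\mathrm{Ku}(Q_3))$ by combining this with the preceding lemma.

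For the Chern character computation, I would invoke the defining short exact sequence for the spinor bundle on an odd-dimensional quadric $Q_{2m+1}\subset \mathbb{P}^{2m+2}$,
\[
0 \to S \to \mathcal{O}_{Q_{2m+1}}^{\oplus 2^{m+1}} \to S(1) \to 0,
\]
which for $m=1$ specializes to $0\to S\to \mathcal{O}_{Q_3}^{\oplus 4}\to S(1)\to 0$. Passing to Chern characters in $K_{\mathrm{num}}(Q_3)\otimes\mathbb{Q}$ and using $\mathrm{ch}(S(1))=e^H\mathrm{ch}(S)$ yields the functional identity $(1+e^H)\,\mathrm{ch}(S)=4$. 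Writing $\mathrm{ch}(S)=\sum_{i=0}^{3}a_i H^i/i!$, expanding $1+e^H$ modulo $H^4$, and comparing coefficients produces a triangular linear system whose unique solution is $(a_0,a_1,a_2,a_3)=(2,-1,0,\tfrac{1}{2})$; that is, $\mathrm{ch}(S)=2-H+\tfrac{1}{12}H^3$.

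For the second assertion, I would first verify that $S$ lies in $\mathrm{Ku}(Q_3)$, i.e. that $\mathrm{Hom}^\bullet_{{\mathcal{D}^b}(Q_3)}(\mathcal{O}_{Q_3}(i),S)=H^\bullet(Q_3,S(-i))=0$ for $i=0,1,2$. These are classical vanishings for the spinor bundle, derivable by twisting the defining sequence above by $\mathcal{O}(-i)$ and using the standard vanishings $H^\bullet(Q_3,\mathcal{O}(-j))=0$ for $1\leq j\leq 3$ together with a short induction on $i$. Once $S\in \mathrm{Ku}(Q_3)$, its numerical class lies in $K_{\mathrm{num}}(\mathrm{Ku}(Q_3))$, which by the preceding lemma is the free rank-one $\mathbb{Z}$-module generated by the primitive lattice vector $2-H+\tfrac{1}{12}H^3$. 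Since $\mathrm{ch}(S)$ is exactly this vector, $[S]$ is itself the primitive generator and $K_{\mathrm{num}}(\mathrm{Ku}(Q_3))=\mathbb{Z}[S]$.

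The main obstacle lies less in the proof itself than in the setup: one must quote the correct fundamental sequence (dual or twisted variants would compute $\mathrm{ch}(S^\vee)$ or $\mathrm{ch}(S(1))$ instead), and the verification that $S$ actually sits in $\mathrm{Ku}(Q_3)$ cannot be skipped — having the right numerical class alone is not enough to conclude that $[S]$ generates the numerical Grothendieck group of the Kuznetsov component. Once these two ingredients are pinned down, everything else reduces to elementary bookkeeping with Chern characters.
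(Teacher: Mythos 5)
Your computation is correct and gives a self-contained derivation of the Chern character from the fundamental short exact sequence $0\to S\to\mathcal{O}_{Q_3}^{\oplus 4}\to S(1)\to 0$, solving $(1+e^H)\,\mathrm{ch}(S)=4$ modulo $H^4$; the arithmetic checks out and yields $2-H+\frac{1}{12}H^3$. The paper, by contrast, supplies no proof here: it simply cites Ottaviani (Remark 2.9) for the Chern character and lets the ``in particular'' follow from the lattice computation carried out in the preceding paragraphs, where the right orthogonal to $1,e^H,e^{2H}$ under the Euler pairing was shown to be $\mathbb{Z}\cdot(2,-1,0,\tfrac1{12})$. So your version fills in a derivation the paper delegates to a reference; that is a perfectly fine alternative.

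One small point worth flagging: you say the verification $S\in\mathrm{Ku}(Q_3)$ ``cannot be skipped'' for the second assertion. In fact the equality $K_{\mathrm{num}}(\mathrm{Ku}(Q_3))=\mathbb{Z}[S]$, as the paper states it, is an equality of sublattices of $K_{\mathrm{num}}(Q_3)$: the preceding numerical argument already identifies $K_{\mathrm{num}}(\mathrm{Ku}(Q_3))$ (the image under the numerical semiorthogonal decomposition) with the rank-one lattice generated by the primitive vector $(2,-1,0,\tfrac1{12})$, and once $\mathrm{ch}(S)$ is shown to equal that vector, the conclusion follows regardless of whether $S$ itself lies in $\mathrm{Ku}(Q_3)$. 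The membership $S\in\mathrm{Ku}(Q_3)$ is of course true and is proved as the next lemma in the paper (and is needed for the subsequent point-object argument), so including the cohomological vanishings is harmless and arguably cleaner exposition --- but it is not logically required to conclude this particular statement.
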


%
%

\begin{lemma}\label{lemma:S-in-Ku}
The spinor bundle $S$ is an object of the Kuznetsov component $\mathrm{Ku}(Q_3)$
\end{lemma}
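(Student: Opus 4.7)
The plan is to unravel the definition of $\mathrm{Ku}(Q_3)$ and reduce the statement to a handful of cohomology vanishings for twists of the spinor bundle, all of which are available in \cite{ottaviani}. Since $\mathrm{Ku}(Q_3)$ is defined as the right orthogonal to the exceptional collection $(\mathcal{O}_{Q_3},\mathcal{O}_{Q_3}(1),\mathcal{O}_{Q_3}(2))$, saying that $S\in \mathrm{Ku}(Q_3)$ is exactly the assertion that
\[
\mathrm{Hom}_{{\mathcal{D}^b}(Q_3)}(\mathcal{O}_{Q_3}(k),S[n])=H^n(Q_3,S(-k))=0
\]
for every $n\in \mathbb{Z}$ and every $k\in\{0,1,2\}$. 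So the whole proof is really a check of nine cohomology groups.

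Next I would invoke two classical facts about the spinor bundle on an odd quadric. The first is that $S$ is arithmetically Cohen--Macaulay on $Q_3$, i.e.\ $H^i(Q_3,S(j))=0$ for every $0<i<3$ and every $j\in\mathbb{Z}$; this immediately kills the entire range $n=1,2$. The second is the self-duality relation $S^\vee\cong S(1)$ peculiar to the odd-dimensional case. Combined with $\omega_{Q_3}=\mathcal{O}_{Q_3}(-3)$, Serre duality then rewrites the $n=3$ groups as
\[
H^3(Q_3,S(-k))\cong H^0(Q_3,S(k-2))^{\!*},
\]
so that the only computations left are the vanishings $H^0(Q_3,S(j))=0$ for $j\leq 0$.

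To finish I would verify these $H^0$-vanishings by a slope argument: $S$ is a $\mu_H$-stable vector bundle of rank $2$ with $\mu_H(S)=-1/2<0$, hence $H^0(Q_3,S)=\mathrm{Hom}(\mathcal{O}_{Q_3},S)=0$, and twisting by $\mathcal{O}(-k)$ with $k\geq 0$ only decreases the slope further, preserving $H^0$-vanishing. The same vanishing then handles both the $n=0$ column (directly) and the $n=3$ column (through the Serre duality isomorphism above), so all nine groups vanish and $S\in \mathrm{Ku}(Q_3)$.

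The ``hard'' part of this plan is really not hard at all: it consists in correctly bookkeeping shifts and conventions (the sign of $\mu_H(S)$, the identification $S^\vee\cong S(1)$, and the value of $\omega_{Q_3}$), which are all standard. Conceptually, this lemma is a repackaging of facts already present in \cite{ottaviani}; the one small temptation to resist is to shortcut the argument by citing Kapranov's full exceptional collection $(S,\mathcal{O}_{Q_3},\mathcal{O}_{Q_3}(1),\mathcal{O}_{Q_3}(2))$, since the whole point of the paper is precisely to \emph{rediscover} that collection via stability conditions rather than assume it.
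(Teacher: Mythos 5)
Your proof is correct and follows essentially the same route as the paper's: reduce membership in $\mathrm{Ku}(Q_3)$ to the vanishing of $H^n(Q_3,S(-k))$ for $k\in\{0,1,2\}$, kill intermediate cohomology by the arithmetically Cohen--Macaulay property from \cite{ottaviani}, and handle $n=3$ by Serre duality combined with the self-duality $S^\vee\cong S(1)$. The only (very minor) deviation is in the $n=0$ column: the paper cites \cite[Theorem 2.3]{ottaviani} directly for $H^0(Q_3,S(-k))=0$, whereas you derive it from $\mu_H$-stability and the slope $\mu_H(S)=-1/2<0$. That substitution is harmless -- it trades one classical fact from \cite{ottaviani} (the explicit $H^0$-vanishing) for another (slope-stability of $S$) -- and arguably fits the spirit of the paper better, since the stability of $S$ is already needed elsewhere in the argument. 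One small bookkeeping remark: the count is really twelve groups ($n\in\{0,1,2,3\}$, $k\in\{0,1,2\}$) before the $n=3$ row is folded into the $n=0$ row by Serre duality; this does not affect the logic.
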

\begin{proof}
We have to show that $\mathrm{Hom}_{{\mathcal{D}^b}(Q_3)}(\mathcal{O}(i),S[n])=0$, for every $i\in\{0,1,2\}$ and every $n\in \mathbb{Z}$. As $\mathcal{O}(i)$ and $S$ are locally free sheaves on $Q_3$, this is equivalent to
\[
H^n(Q_3,S(-i))=0
\]
for $i\in\{0,1,2\}$ and every $n\in \mathbb{Z}$. This is trivial for $n<0$ and for $n>3$, while for $0\leq n<3$ it is a particular case of \cite[Theorem 2.3]{ottaviani}. For $n=3$ we argue by Serre duality:
\begin{align*}
H^3(Q_3,S(-i))&=H^0(Q_3;S^*(i-3))^\vee\\
&=H^0(Q_3,S^*(i-3))^\vee\\
&=H^0(Q_3,S^*(i-3))^\vee\\
&=H^0(Q_3,S(i-2))^\vee\\
&=0,
\end{align*}
where we use the isomorphism $S^\ast\cong S(1)$ (see \cite[Theorem 2.8]{ottaviani}) and \cite[Theorem 2.3]{ottaviani} again.
\end{proof}
\begin{lemma}\label{lemma:S-as-point-object}
The spinor bundle $S$ on $Q_3$ is a 0-codimensional point object in ${\mathrm{Ku}}(Q_3)$ in the sense of Bondal-Orlov \cite{bondal-orlov}.
\end{lemma}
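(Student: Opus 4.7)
The plan is to verify, with codimension $d=0$, the three conditions of Definition \ref{def:BO-point-objects}. Since $\mathrm{Ku}(Q_3)$ is a full subcategory of ${\mathcal{D}^b}(Q_3)$ and $S\in\mathrm{Ku}(Q_3)$ by Lemma \ref{lemma:S-in-Ku}, all $\mathrm{Hom}$-groups may be computed in ${\mathcal{D}^b}(Q_3)$. Condition $(2)$ is then automatic, since $S$ is a coherent sheaf; condition $(1)$, $\mathrm{End}(S)=\mathbb{C}$, is the classical simplicity of the spinor bundle, which I would take from Ottaviani (or from the stability of $S$).

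The substantive point is condition $(3)$: $\mathbb{S}_{\mathrm{Ku}(Q_3)}(S)\cong S$. By Lemma \ref{lemma:inverse-serre} together with the identity $\mathrm{O}^{3}_{\mathrm{Ku}(Q_3)}(E)=\tau_{\mathrm{Ku}(Q_3)}(E(3))$ stated just before that lemma, this reduces to the isomorphism
\[
\tau_{\mathrm{Ku}(Q_3)}\bigl(S(3)\bigr)\;\cong\;S[3].
\]
To compute the projection, I would start from the spinor ``Euler sequence'' $0\to S\to\mathcal{O}_{Q_3}^{\,4}\to S(1)\to 0$ on $Q_3$ (Ottaviani), twist it by $\mathcal{O}(k)$ for $k=0,1,2$, and splice the three resulting short exact sequences along the intermediate copies of $S(1)$ and $S(2)$ to obtain the exact sequence
\[
0\to S\to \mathcal{O}_{Q_3}^{\,4}\to \mathcal{O}_{Q_3}(1)^{\,4}\to \mathcal{O}_{Q_3}(2)^{\,4}\to S(3)\to 0.
\]
Viewing the three middle terms as a complex $C^{\bullet}$ placed in degrees $-2,-1,0$, exactness yields $H^{-2}(C^{\bullet})=S$, $H^{-1}(C^{\bullet})=0$, and $H^{0}(C^{\bullet})=S(3)$, and hence a canonical truncation triangle $S[2]\to C^{\bullet}\to S(3)\xrightarrow{+1}$. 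Rotating produces the triangle
\[
C^{\bullet}\to S(3)\to S[3]\xrightarrow{+1} C^{\bullet}[1],
\]
in which $C^{\bullet}\in\langle\mathcal{O}_{Q_3},\mathcal{O}_{Q_3}(1),\mathcal{O}_{Q_3}(2)\rangle$ (being iteratively built from shifted cones of the generators) and $S[3]\in\mathrm{Ku}(Q_3)$. By uniqueness of the decomposition triangle for the semiorthogonal decomposition $\langle\mathrm{Ku}(Q_3),\mathcal{O}_{Q_3},\mathcal{O}_{Q_3}(1),\mathcal{O}_{Q_3}(2)\rangle$, this identifies $\tau_{\mathrm{Ku}(Q_3)}(S(3))\cong S[3]$, and therefore $\mathbb{S}_{\mathrm{Ku}(Q_3)}(S)\cong \mathrm{O}^{-3}_{\mathrm{Ku}(Q_3)}(S)[3]\cong S[-3][3]\cong S$.

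The principal obstacle I foresee is bookkeeping rather than anything conceptual: one must keep the orientation of the decomposition triangle consistent with the convention that the $\mathrm{Ku}$-part is the left-hand factor $\mathcal{A}_1$ in $\langle\mathcal{A}_1,\mathcal{A}_2\rangle$ (so the decomposition triangle places the exceptional piece on the left of $S(3)$ and the $\mathrm{Ku}$-piece on the right), and the cohomological placement of $C^{\bullet}$ must be chosen so that the shift $[3]$ comes out correctly. Once the spinor Euler sequence is in place, the remainder is a direct splicing-and-truncation calculation, and Lemma \ref{lemma:inverse-serre} converts the projection computation into the Serre-invariance of $S$, yielding that $S$ is a codimension-$0$ point object.
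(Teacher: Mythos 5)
Your proof is correct and takes essentially the same approach as the paper: verify conditions (1) and (2) as classical facts about the spinor bundle, then verify (3) by computing a rotation of $S$ from the spinor Euler sequence and applying Lemma \ref{lemma:inverse-serre}. The only difference in execution is that the paper computes $\mathrm{O}_{\mathrm{Ku}(Q_3)}(S)=\tau_{\mathrm{Ku}(Q_3)}(S(1))=S[1]$ from a single copy of the sequence $0\to S\to\mathcal{O}_{Q_3}^{4}\to S(1)\to 0$ and then iterates, whereas you splice three twists of the sequence to resolve $S(3)$ and read off $\tau_{\mathrm{Ku}(Q_3)}(S(3))=S[3]$ in one step; both routes are valid and yield the same conclusion.
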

\begin{proof}
We have to show that the three conditions from Definition \ref{def:BO-point-objects} are satisfied.
The statements appearing in the first two conditions for the spinor bundle are classical: since $S$ is a locally free sheaf, we have
\begin{align*}
    \mathrm{Hom}_{{\mathrm{Ku}}(Q_3)}(S,S[n])&=\mathrm{Hom}_{{\mathcal{D}^b}(Q_3)}(S,S[n])\\
    &=\mathrm{Ext}^n_{\mathrm{Coh}(Q_3)}(S,S)\\
    &=H^n(Q_3,S^\ast\otimes_{\mathcal{O}_{Q_3}}S).
\end{align*}
This immediately gives $\mathrm{Hom}_{{\mathrm{Ku}}(Q_3)}(S,S[n])=0$ for every $n<0$, while for $n=0$ it is known that $H^0(Q_3,S^\ast\otimes_{\mathcal{O}_{Q_3}}S)=\mathbb{C}$, see, e.g., \cite[Lemma 2.7]{ottaviani}.
As far as the third condition is concerned, by rotating the short exact sequence of vector bundles
\[ 0 \to S \to \mathcal{O}^4_{Q_3} \to S(1) \to 0 \]
\cite[Theorem 2.8]{ottaviani} we obtain the distinguished triangle
\[ \mathcal{O}^4_{Q_3} \to S(1) \to S[1] \overset{+1}{\to} \mathcal{O}^4_{Q_3}[1]. \]
Here $\mathcal{O}_{Q_3}$ is an object in the triangulated subcategory of ${\mathcal{D}^b}(Q_3)$ spanned by the eceptional collection $(\mathcal{O}_{Q_3},\mathcal{O}_{Q_3}(1),\mathcal{O}_{Q_3}(2))$, while $S[1]$ is an object in the Kuznetsov component ${\mathrm{Ku}}(Q_3)$. Hence the image of $S(1)$ in ${\mathrm{Ku}}(Q_3)$ via the truncation functor $\tau_{\mathrm{Ku}(Q_3)}\colon {\mathcal{D}^b}(Q_3)\to {\mathrm{Ku}}(Q_3)$ is $S[1]$. As the composition $\tau_{\mathrm{Ku}(Q_3)}\circ (\mathcal{O}_{Q_3} (1)\otimes -)$ is the rotation functor ${\mathrm{O}}_{\mathrm{Ku}(Q_3)}$ for the Kuznetsov component, we obtain
$\mathrm{O}_{\mathrm{Ku}(Q_3)} (S) = S[1]$. Therefore, Lemma \ref{lemma:inverse-serre} gives  
%
%
%
\[ \mathbb{S}_{\mathrm{Ku}(Q_3)} (S)= O_{\mathrm{Ku}}^{-3}(S)[3] = S. \]
\end{proof}
The following result is classical. We reprove by using the fact $S$ is a point object in the Kuznetsov component.
\begin{corol}
The spinor bundle $S$ is an exceptional object in ${\mathcal{D}^b}(Q_3)$ and so $(S,\mathcal{O}_{Q_3},\mathcal{O}_{Q_3}(1),\mathcal{O}_{Q_3}(2))$ is an exceptional collection. 
\end{corol}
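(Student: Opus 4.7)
The plan is to deduce exceptionality of $S$ directly from the fact, established in Lemma \ref{lemma:S-as-point-object}, that $S$ is a codimension $0$ point object in $\mathrm{Ku}(Q_3)$, combined with Serre duality in the Kuznetsov component. The first step is to observe that since $\mathrm{Ku}(Q_3)$ is a full triangulated subcategory of ${\mathcal{D}^b}(Q_3)$ and $S \in \mathrm{Ku}(Q_3)$ by Lemma \ref{lemma:S-in-Ku}, we have
\[
\mathrm{Hom}_{{\mathcal{D}^b}(Q_3)}(S,S[n]) = \mathrm{Hom}_{\mathrm{Ku}(Q_3)}(S,S[n])
\]
for every $n \in \mathbb{Z}$. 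Thus the conditions defining a point object already give us $\mathrm{Hom}(S,S) = \mathbb{C}$ and $\mathrm{Hom}(S,S[n]) = 0$ for $n < 0$.

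The remaining case is $n > 0$, which is handled by Serre duality in $\mathrm{Ku}(Q_3)$ together with the identity $\mathbb{S}_{\mathrm{Ku}(Q_3)}(S) = S$ from the third point-object condition:
\[
\mathrm{Hom}_{\mathrm{Ku}(Q_3)}(S,S[n]) \cong \mathrm{Hom}_{\mathrm{Ku}(Q_3)}(S[n],\mathbb{S}_{\mathrm{Ku}(Q_3)}(S))^\vee = \mathrm{Hom}_{\mathrm{Ku}(Q_3)}(S,S[-n])^\vee,
\]
and the right-hand side vanishes because $-n < 0$. This shows that $S$ is an exceptional object of ${\mathcal{D}^b}(Q_3)$.

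For the second statement, we need the hom-vanishings between $S$ and the other three line bundles in the claimed collection. But this is built in by the very choice of $\mathrm{Ku}(Q_3)$: by definition it is the right orthogonal to $\langle \mathcal{O}_{Q_3},\mathcal{O}_{Q_3}(1),\mathcal{O}_{Q_3}(2)\rangle$, so $S\in \mathrm{Ku}(Q_3)$ already yields $\mathrm{Hom}(\mathcal{O}_{Q_3}(i),S[n])=0$ for $i\in\{0,1,2\}$ and all $n\in \mathbb{Z}$. Since the line bundles $\mathcal{O}_{Q_3}(i)$ form an exceptional collection and $S$ is exceptional with the required orthogonality to them, the quadruple $(S,\mathcal{O}_{Q_3},\mathcal{O}_{Q_3}(1),\mathcal{O}_{Q_3}(2))$ is an exceptional collection.

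There is essentially no hard step: all the work has been absorbed into Lemma \ref{lemma:S-in-Ku} and Lemma \ref{lemma:S-as-point-object}. The only point worth double-checking is that Serre duality may legitimately be applied inside $\mathrm{Ku}(Q_3)$, which is guaranteed by the admissibility of the Kuznetsov component (Remarks \ref{rem:admissibility}--\ref{rem:admissibility-2}) and the existence of $\mathbb{S}_{\mathrm{Ku}(Q_3)}$ explicitly described in Lemma \ref{lemma:inverse-serre}.
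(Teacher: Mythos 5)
Your proof is correct and follows essentially the same route as the paper's: reduce to showing $\mathrm{Hom}_{\mathrm{Ku}(Q_3)}(S,S[n])=0$ for $n>0$ by combining fullness of $\mathrm{Ku}(Q_3)\hookrightarrow \mathcal{D}^b(Q_3)$ with the point-object properties of $S$, then close the case $n>0$ by Serre duality together with $\mathbb{S}_{\mathrm{Ku}(Q_3)}(S)\cong S$. The only difference is that you explicitly spell out the semiorthogonality with the line bundles (automatic from $S\in\mathrm{Ku}(Q_3)$), which the paper treats as clear.
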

\begin{proof}
We need to show that 
\[
\mathrm{Hom}_{{\mathcal{D}^b}(Q_3)}(S,S[n])\cong\begin{cases}
\mathbb{C}\qquad&\text{if $n=0$}\\
0\qquad&\text{if $n=0$}
\end{cases}
\]
As $\mathrm{Ku}(Q_3)$ is a full subcategory od ${\mathcal{D}^b}(Q_3)$, by Lemma \ref{lemma:S-as-point-object} we only need to prove that $\mathrm{Hom}_{\mathrm{Ku}(Q_3)}(S,S[n])=0$ for $n>0$. By Lemma \ref{lemma:S-as-point-object} again, we have
\begin{align*}
\mathrm{Hom}_{\mathrm{Ku}(Q_3)}(S,S[n])&=\mathrm{Hom}_{\mathrm{Ku}(Q_3)}(S,\mathbb{S}_{\mathrm{Ku}(Q_3)}S[n])\\
&=\mathrm{Hom}_{\mathrm{Ku}(Q_3)}(S,S[-n])^\vee\\
&=0,
\end{align*}
for $n>0$.
\end{proof}

In order to exhibit a numerical stability condition on ${\mathrm{Ku}}(Q_3)$ we will make use of the following result.
\begin{prop}[Proposition 5.1.\cite{BLMS}]\label{prop:induces}
Let $(E_1,\dots,E_m)$ be an exceptional collection in a triangulated category $\mathcal{D}$, and let $\mathcal{K}=\langle\{E_i\}\rangle^\perp\subseteq \mathcal{D}$ be the corresponding  right orthogonal. 
Let $\sigma=(\mathcal{A},Z)$ be a weak stability condition on $\mathcal{D}$, and let $\mathcal{A}_{\mathcal{K}}=\mathcal{A}\cap \mathcal{K}$. Assume $\mathcal{D}$ has a Serre functor $S$ and that the following hold:
\begin{enumerate}
    \item $E_i, S(E_i)[-1]\in \mathcal{A}$ for every $i\in\{1,\dots,m\}$;
    \item $Z(E_i)\neq 0$ for every $i\in\{1,\dots,m\}$;
    \item for any nonzero object $F\in \mathcal{A}_{\mathcal{K}}$ one has $Z(F)\neq 0$.
\end{enumerate}
Then $(Z\bigr\vert_{\mathcal{A}_{\mathcal{K}}},\mathcal{A}_{\mathcal{K}})$ is a stability condition on $\mathcal{K}$.
\end{prop}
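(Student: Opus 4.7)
The two conditions to verify are that $\mathcal{A}_{\mathcal{K}}$ is the heart of a bounded $t$-structure on $\mathcal{K}$ and that $Z|_{\mathcal{A}_{\mathcal{K}}}$ is a stability function on $\mathcal{A}_{\mathcal{K}}$ with the Harder--Narasimhan property.  The approach is to induce both the $t$-structure and the slope function on $\mathcal{K}$ directly from their ambient counterparts on $\mathcal{D}$, using hypotheses (1)--(3) to ensure the restricted data stay inside $\mathcal{K}$ and remain non-degenerate there.

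\textbf{Step 1 (the heart).}  For $F\in \mathcal{K}$, consider the truncation triangle
\[
\tau^{\leq 0}_{\mathcal{A}}(F)\to F\to \tau^{>0}_{\mathcal{A}}(F)\xrightarrow{+1}
\]
in $\mathcal{D}$ and show that $\tau^{\leq 0}_{\mathcal{A}}(F)\in \mathcal{K}$; since $\mathcal{K}$ is triangulated, $\tau^{>0}_{\mathcal{A}}(F)\in \mathcal{K}$ then follows from the triangle.  Equivalently, one needs $\mathrm{Hom}_{\mathcal{D}}(E_j,\tau^{\leq 0}_{\mathcal{A}}(F)[n])=0$ for every $j$ and every $n\in \mathbb{Z}$.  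For $n\geq 1$ this is immediate from $t$-structure orthogonality together with $E_j\in \mathcal{A}$, since $\tau^{\leq 0}_{\mathcal{A}}(F)[n]$ sits in cohomological degree $\leq -1$.  For $n\leq 0$, the long exact sequence of the truncation triangle combined with $F\in \langle E_j\rangle^\perp$ yields the isomorphism
\[
\mathrm{Hom}_{\mathcal{D}}(E_j,\tau^{\leq 0}_{\mathcal{A}}(F)[n])\cong \mathrm{Hom}_{\mathcal{D}}(E_j,\tau^{>0}_{\mathcal{A}}(F)[n-1]).
\]
Applying Serre duality converts the right-hand side to $\mathrm{Hom}_{\mathcal{D}}(\tau^{>0}_{\mathcal{A}}(F)[n-1],S(E_j))^\vee$, and the hypothesis $S(E_j)[-1]\in \mathcal{A}$ places $S(E_j)$ in cohomological degree $-1$, while $\tau^{>0}_{\mathcal{A}}(F)[n-1]$ sits in degree $\geq 2-n\geq 2$; the orthogonality of the $t$-structure kills the Hom.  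Iterating the truncation bounds the $\mathcal{A}$-amplitude of $F$ and exhibits $(\mathcal{K}\cap \mathcal{A}^{\leq 0},\mathcal{K}\cap \mathcal{A}^{\geq 0})$ as a bounded $t$-structure on $\mathcal{K}$ with heart $\mathcal{A}_{\mathcal{K}}$.

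\textbf{Step 2 (stability function and HN).}  Since $(\mathcal{A},Z)$ is a weak stability condition on $\mathcal{D}$, every nonzero $F\in \mathcal{A}$ satisfies $Z(F)\in \mathbb{H}\cup \mathbb{R}_{\leq 0}$; hypothesis (3) rules out $Z(F)=0$ for $F\in \mathcal{A}_{\mathcal{K}}\setminus\{0\}$, so $Z|_{\mathcal{A}_{\mathcal{K}}}$ is a genuine stability function.  For the HN property one invokes Bridgeland's general criterion: given a stability function on an abelian category, HN filtrations exist provided there are no infinite chains of subobjects with strictly monotone phases.  This finiteness descends from $\mathcal{A}$, where it is guaranteed by the weak stability of $\sigma$, to $\mathcal{A}_{\mathcal{K}}$ using hypothesis (2) together with the admissibility of $\mathcal{K}\subseteq \mathcal{D}$ ensured by the exceptional collection.

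\textbf{Main obstacle.}  The crux of the proof is Step~1.  The delicate point is that the $\mathcal{A}$-truncations of an object in $\mathcal{K}$ need not lie in $\mathcal{K}$ a priori, and the two halves of hypothesis (1) are calibrated precisely to compensate: $E_j\in \mathcal{A}$ handles the vanishings in non-negative shifts directly, while $S(E_j)[-1]\in \mathcal{A}$ handles the non-positive shifts via Serre duality.  Matching these two cohomological half-planes is where the bulk of the argument lives, and is the reason the hypothesis is formulated in this seemingly asymmetric form.
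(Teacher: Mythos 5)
The paper does not supply its own proof of this statement: it is quoted from \cite{BLMS}. Reviewing your argument on its own merits, Step~1 contains a genuine error in the direction of the $t$-structure orthogonality, and it infects both branches of your case split. The orthogonality axiom asserts $\mathrm{Hom}_{\mathcal{D}}(\mathcal{D}^{\leq 0},\mathcal{D}^{\geq 1})=0$, i.e., there are no morphisms from an object concentrated in low cohomological degree to one concentrated in strictly higher degree. In your $n\geq 1$ branch the source $E_j$ sits in degree $0$ while the target $\tau^{\leq 0}_{\mathcal{A}}(F)[n]$ sits in degree $\leq -1$; the morphism therefore goes from higher degree to lower degree, which is precisely the direction (the positive $\mathrm{Ext}$ groups between heart objects) that the axiom does \emph{not} control, so the claimed vanishing is unjustified. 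Likewise, in your $n\leq 0$ branch, after Serre duality you arrive at $\mathrm{Hom}_{\mathcal{D}}(\tau^{>0}_{\mathcal{A}}(F)[n-1],S(E_j))$ with source in degree $\geq 2$ and target in degree $-1$: again the wrong direction for orthogonality.

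The correct split is $n\leq 1$ versus $n\geq 2$, and the two halves of hypothesis~(1) attach to these ranges in the opposite way to what you propose. For $n\leq 1$, use the triangle exactly as you do to get $\mathrm{Hom}(E_j,\tau^{\leq 0}_{\mathcal{A}}F[n])\cong\mathrm{Hom}(E_j,\tau^{>0}_{\mathcal{A}}F[n-1])$; here $E_j\in\mathcal{D}^{\leq 0}$ and $\tau^{>0}_{\mathcal{A}}F[n-1]\in\mathcal{D}^{\geq 2-n}\subseteq\mathcal{D}^{\geq 1}$, so orthogonality applies directly, and no Serre duality is needed. For $n\geq 2$, apply Serre duality to $\mathrm{Hom}(E_j,\tau^{\leq 0}_{\mathcal{A}}F[n])\cong\mathrm{Hom}(\tau^{\leq 0}_{\mathcal{A}}F[n],S(E_j))^\vee$; now the source lies in $\mathcal{D}^{\leq -n}\subseteq\mathcal{D}^{\leq -2}$ and the target $S(E_j)=(S(E_j)[-1])[1]$ lies in $\mathcal{D}^{\geq -1}$, so orthogonality applies, and this is exactly where $S(E_j)[-1]\in\mathcal{A}$ is used. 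With your split, the $n=1$ case falls through the cracks. Finally, Step~2 omits the support property, which is part of the definition of a Bridgeland stability condition and must be verified (it descends from the support property built into the weak stability condition $\sigma$), and hypothesis~(2) does not enter the Harder--Narasimhan argument in the way you indicate.
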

We can now prove
\begin{prop}\label{prop:stability-condition-on-ku}
Let $0<\alpha<\frac{1}{2}$. Then the weak stability condition $(Z_{\alpha,-\frac{1}{2}},\allowbreak \mathrm{Coh}^0_{\alpha,-\frac{1}{2}}(Q_3))$ induces a numerical stability condition on $\mathrm{Ku}(Q_3)$, whose heart is $\mathrm{Ku}(Q_3)\cap \mathrm{Coh}^0_{\alpha,-\frac{1}{2}}(Q_3)$.  
\end{prop}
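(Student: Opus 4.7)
The plan is to apply Proposition \ref{prop:induces} to the exceptional collection $(\mathcal{O}_{Q_3},\mathcal{O}_{Q_3}(1),\mathcal{O}_{Q_3}(2))$ in $\mathcal{D}={\mathcal{D}^b}(Q_3)$, taking as ambient weak stability condition $\sigma=(Z_{\alpha,-1/2},\mathrm{Coh}^0_{\alpha,-1/2}(Q_3))$ and $\mathcal{K}=\mathrm{Ku}(Q_3)$. Under this choice the induced heart is tautologically $\mathrm{Ku}(Q_3)\cap\mathrm{Coh}^0_{\alpha,-1/2}(Q_3)$, which matches the one claimed in the statement; what remains is to verify the three numbered hypotheses of Proposition \ref{prop:induces}.

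For hypothesis (1) I would show that each of the six objects $\mathcal{O}_{Q_3}(k)$ for $k=0,1,2$ and $\mathbb{S}_{Q_3}(\mathcal{O}_{Q_3}(k))[-1]=\mathcal{O}_{Q_3}(k-3)[2]$ (using that $\mathbb{S}_{Q_3}(E)=E(-3)[3]$ since $Q_3$ is Fano of index $3$) lies in $\mathrm{Coh}^0_{\alpha,-1/2}(Q_3)$ by deciding which of the four situations in Remark \ref{remark:four-cases} applies. Each line bundle $\mathcal{O}_{Q_3}(k)$ is slope-stable with $\mu_H=k>-1/2$, so case (1) is the relevant one; a direct computation gives
\[
\mu_{\alpha,-1/2}(\mathcal{O}_{Q_3}(k))=\frac{(k+1/2)^2-\alpha^2}{2\alpha(k+1/2)},
\]
which is positive for $k=1,2$ unconditionally and positive for $k=0$ exactly when $\alpha<1/2$. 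Dually, $\mu_H(\mathcal{O}_{Q_3}(k-3))=k-3\leq -1/2$, so the twisted-shifted objects must fall under case (4); shift-invariance of the tilted slope reduces the requirement to $\mu_{\alpha,-1/2}(\mathcal{O}_{Q_3}(k-3))\leq 0$, and the same formula evaluated at $k-3\in\{-3,-2,-1\}$ yields a negative value, with the binding case now being $k=2$, i.e., $\mathcal{O}_{Q_3}(-1)$, where negativity is once more equivalent to $\alpha<1/2$.

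Hypothesis (2) amounts to the one-line observation that the imaginary part of $Z_{\alpha,-1/2}(\mathcal{O}_{Q_3}(k))$ equals $\alpha(2k+1)$, which is strictly positive for $k=0,1,2$. For hypothesis (3), suppose $F$ is a nonzero object of $\mathrm{Ku}(Q_3)\cap\mathrm{Coh}^0_{\alpha,-1/2}(Q_3)$ with $Z_{\alpha,-1/2}(F)=0$; by Remark \ref{rem:in-codimension-3} it is then a coherent sheaf on $Q_3$ supported in codimension at least three, hence on a finite set of points. But any such nonzero sheaf satisfies $\mathrm{Hom}_{{\mathcal{D}^b}(Q_3)}(\mathcal{O}_{Q_3},F)=H^0(Q_3,F)\neq 0$, contradicting $F\in \mathcal{O}_{Q_3}^\perp\supseteq \mathrm{Ku}(Q_3)$. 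This forces $F=0$.

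The main obstacle will be the bookkeeping for hypothesis (1): one has to correctly assign each of the six objects to its case in Remark \ref{remark:four-cases} and trace exactly where the constraint $\alpha<1/2$ is used --- symmetrically, as it turns out, at the two extreme objects $\mathcal{O}_{Q_3}$ (case 1) and $\mathcal{O}_{Q_3}(-1)[2]$ (case 4). Hypotheses (2) and (3) are by comparison short: the former is a direct calculation of the imaginary part of $Z_{\alpha,-1/2}$, and the latter reduces immediately to the codimension result of Remark \ref{rem:in-codimension-3} combined with the elementary fact that nonzero zero-dimensional sheaves have nontrivial global sections.
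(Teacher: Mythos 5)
Your proposal follows the paper's proof essentially step for step: the same reduction via Proposition \ref{prop:induces}, the same verification of hypothesis (1) through the slope inequalities of Remark \ref{remark:four-cases} (with the constraint $\alpha<1/2$ arising at the same two objects, $\mathcal{O}_{Q_3}$ and $\mathcal{O}_{Q_3}(-1)[2]$), the same imaginary-part computation for hypothesis (2), and the same invocation of Remark \ref{rem:in-codimension-3} followed by $\mathrm{Hom}(\mathcal{O}_{Q_3},F)\neq 0$ for hypothesis (3).

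The one step you omit is that Remark \ref{remark:four-cases} applies only to objects that are simultaneously $\sigma_H$- and $\sigma_{\alpha,\beta}$-semistable. You establish slope-stability of the line bundles but not their tilt-semistability with respect to $\sigma_{\alpha,-1/2}$. The paper obtains this from $\Delta_H(\mathcal{O}_{Q_3}(n))=0$ together with \cite[Proposition 2.14]{BLMS}, which gives that $\mathcal{O}_{Q_3}(n)[k]$ is $\sigma_{\alpha,\beta}$-semistable for every $n,k$ and every $(\alpha,\beta)$. Adding that one observation makes your argument complete; as written, the application of Remark \ref{remark:four-cases} is not fully justified.
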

\begin{proof}
We check that conditions (1)-(3) in proposition \ref{prop:induces} are satisfied. As the weak stability condition $(Z_{\alpha,-\frac{1}{2}},\mathrm{Coh}^0_{\alpha,-\frac{1}{2}}(Q_3))$ is numerical, so is the induced stability condition on $\mathrm{Ku}(Q_3)$.
The exceptional collection of $\mathcal{D}^b(Q_3)$ defining $\mathrm{Ku}(Q_3)$ as its right orthogonal is $(\mathcal{O}_{Q_3},\mathcal{O}_{Q_3}(1),\mathcal{O}_{Q_3}(2))$, and the Serre functor of $\mathcal{D}^b(Q_3)$ is $\mathbb{S}_{Q_3}(E)=E(-3)[3]$. So, we need to check that
\begin{equation}\label{eq:in-the-tilted-heart}
\mathcal{O}_{Q_3}(n), \mathcal{O}_{Q_3}(n-3)[2]\in \mathrm{Coh}^0_{\alpha,\frac{1}{2}}(Q_3), \qquad \forall n\in \{0,1,2\}.
\end{equation}
For any $n\in \mathbb{Z}$, the line bundle $\mathcal{O}_{Q_3}(n)$ is slope stable and satisfies $\Delta_H(\mathcal{O}_{Q_3}(n))=0$, where
\[
\Delta_H(E)=(\mathrm{ch}_1(E)H^2)^2-2(\mathrm{ch}_0(E)H^3)(\mathrm{ch}_2(E)H).
\]
So from \cite[Proposition 2.14]{BLMS} we have that $\mathcal{O}(n)[k]$ is both $\sigma_H$- and $\sigma_{\alpha,-\frac{1}{2}}$-semistable for every $n,k\in \mathbb{Z}$. As $\mathcal{O}(n)[k]\in \mathrm{Coh}(Q_3)[k]$, joint $\sigma_H$- and  $\sigma_{\alpha,-\frac{1}{2}}$-semistability reduces checking (\ref{eq:in-the-tilted-heart}) to checking the condition on the slopes from Remark \ref{remark:four-cases}.
As
\[
Z_H(\mathcal{O}(n)[k])=-2n+2\sqrt{-1}\,
\]
and
\[
Z_{\alpha,-\frac{1}{2}}(\mathcal{O}(n)[k])=(-1)^k\left((\alpha^2 -n^2-n-\frac{1}{4})+\sqrt{-1}\,\alpha(2n+1) \right)
\]
%
we see that for any $n\in \{0,1,2\}$ and any $0<\alpha<\frac{1}{2}$ we have $\mathcal{O}(n)\in \mathrm{Coh}(Q_3)$ with
\[
\mu_{H}(\mathcal{O}(n))=n>-\frac{1}{2},\quad \mu_{\alpha,-\frac{1}{2}}(\mathcal{O}(n))=\frac{n^2 +n+\frac{1}{4}-\alpha^2}{\alpha(2n+1)} >0,
\]
and $(\mathcal{O}(n-3)[2])[-2]\in\mathrm{Coh}(Q_3)$ with
\[
 \mu_{H}((\mathcal{O}(n-3)[2])[-2])=n-3\leq -\frac{1}{2}\]
 and
 \[
 \mu _{\alpha, -\frac{1}{2}}(\mathcal{O}(n-3)[2][-1]) =\frac{n^2-5n+\frac{25}{4}-\alpha^2}{\alpha(2n-5)}\leq 0.
\]
 Next, it is immediate that $Z_{\alpha,-\frac{1}{2}}(\mathcal{O}(n))\neq 0$ and $Z_{\alpha,-\frac{1}{2}}(\mathcal{O}(n-3)[2])\neq 0$ for any $n\in \{0,1,2\}$.
Finally, to show that for any nonzero object $F\in \mathrm{Coh}^0_{\alpha,\frac{1}{2}}(Q_3)\cap \mathrm{Ku}(Q_3)$ we have $Z_{\alpha,-\frac{1}{2}}(F)\neq 0$, recall from Remark \ref{rem:in-codimension-3} that a nonzero object $F\in \mathrm{Coh}^0_{\alpha,\frac{1}{2}}(Q_3)$ with $Z_{\alpha,-\frac{1}{2}}(F)= 0$ is a coherent sheaf supported in dimension 0. This implies 
\[
\mathrm{Hom}_{{\mathcal{D}^b}(Q_3)}(\mathcal{O},F)=\mathrm{Hom}_{\mathrm{Coh}(Q_3)}(\mathcal{O},F)\neq 0,
\]
and so $F$ can not be an object in $\mathrm{Ku}(Q_3)$.
\end{proof}

Next, we show that the spinor bundle $S$ is $\sigma_{\alpha,-\frac{1}{2}}$-semistable for the values of $\alpha$ as in the statement of Proposition \ref{prop:stability-condition-on-ku}. To do this, we will use the explicit wall and chamber structure of the $(\alpha,\beta)$-half-plane, see \cite{maciocia,Barbara}. 

\begin{lem} {\bf (No-wall lemma)}\label{nowall}
The $(\alpha,\beta)$-half-plane contains no walls for the truncated Chern vector $\mathrm{ch}_{\leq 2}(S)=2-H$ of the spinor bundle $S$ on $Q_3$.
\end{lem}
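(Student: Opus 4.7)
The plan is to show directly that no potential destabilizer $w$ produces a wall meeting the open upper half-plane, by a case analysis based on the Bogomolov--Gieseker and Bertram-type inequalities.

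First I would parametrize a potential destabilizer by its truncated Chern character $\mathrm{ch}(w)=(a,\,bH,\,(c/2)H^2)$ with $(a,b,c)\in\mathbb{Z}^3$. Using $H^3=2$ on $Q_3$, the condition $\mu_{\alpha,\beta}(w)=\mu_{\alpha,\beta}(v)$ with $v=\mathrm{ch}_{\leq 2}(S)=(2,-H,0)$ reduces, after clearing denominators, to the wall equation
\[
(a+2b)(\alpha^2+\beta^2)-2c\beta-c=0.
\]
When $a+2b\neq 0$ this is a circle centred on the $\beta$-axis, and when $a+2b=0$ with $c\neq 0$ it collapses to the vertical line $\beta=-1/2$, which is the tilting boundary and not a wall for $\sigma_{\alpha,\beta}$.

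Next I would invoke the generalized Bogomolov--Gieseker inequality \cite[Theorem 3.5]{BMS}, forcing $\Delta_H(w)\geq 0$ and $\Delta_H(v-w)\geq 0$, together with the Bertram-type bound $\Delta_H(w)+\Delta_H(v-w)\leq\Delta_H(v)=4$ (standard for destabilizing short exact sequences of the same phase). Since integer Chern classes on $Q_3$ force $\Delta_H\in 4\mathbb{Z}$, this reduces the discussion to
\[
(\Delta_H(w),\Delta_H(v-w))\in\{(0,0),\,(0,4),\,(4,0)\}.
\]
In terms of $(a,b,c)$ these translate respectively into $b^2=ac$ combined with $(1+b)^2=c(a-2)$, or the analogous equations with a $1$ on the right-hand side.

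Finally I would dispatch the three cases. Case $(0,0)$ gives, after adding the two equations, $b^2+(1+b)^2=2c(a-1)$; the left-hand side is odd (as $b^2$ and $(1+b)^2$ have opposite parities) while the right-hand side is even, so no integer solution exists. Case $(0,4)$ forces $b=-c$ and then either $c=0$ (trivial $w$) or $a=c$, giving $w=c(1,-H,H^2/2)$; plugging into the wall equation yields $\alpha^2+(\beta+1)^2=0$, which meets the upper half-plane at no point. Case $(4,0)$ is entirely symmetric, producing the same degenerate circle. The most delicate step is verifying that the seemingly nontrivial cases $(0,4)$ and $(4,0)$ produce only this degenerate locus; once this is established, the absence of walls for $\mathrm{ch}_{\leq 2}(S)=2-H$ in the open $(\alpha,\beta)$-half-plane follows at once.
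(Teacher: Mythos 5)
Your proposal is correct, but it reaches the conclusion by a genuinely different route than the paper. The paper invokes Maciocia's result \cite[Corollary 2.8]{maciocia}: since $\mathrm{ch}_0 H^3 = 4 > 0$ and the normalized discriminant is $F = 1/4 > 0$, every numerical wall for $v = 2 - H$ must cross the vertical line $\beta_0 = -1/2 - 1/2 = -1$. At $\beta_0 = -1$ the destabilizing condition forces $0 < \mathrm{ch}_1^{-1}(E_0)H^2 < 2$, and since $\beta_0$ is an integer this is $0 < 2n_0 < 2$ with $n_0 \in \mathbb{Z}$, impossible. Your approach instead parametrizes the potential destabilizer explicitly (correctly using the integrality lattice $\mathbb{Z}\oplus\mathbb{Z}H\oplus\mathbb{Z}\frac{H^2}{2}$ to force $\Delta_H \in 4\mathbb{Z}$), derives the wall equation $(a+2b)(\alpha^2+\beta^2) - 2c\beta - c = 0$ (I verified it), and rules out all cases via the constraint $\Delta_H(w), \Delta_H(v-w) \geq 0$ and $\Delta_H(w) + \Delta_H(v-w) \leq 4$. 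Your case analysis is correct: $(0,0)$ dies by the parity of $2b^2+2b+1$ versus $2c(a-1)$; $(0,4)$ forces $b=-c$ hence $w$ proportional to $\mathrm{ch}_{\leq 2}(\mathcal{O}(-1))$ and the degenerate circle $\alpha^2+(\beta+1)^2=0$; $(4,0)$ is the mirror image with $v-w$ proportional to $\mathrm{ch}_{\leq 2}(\mathcal{O}(-1))$. One remark worth flagging: the Bertram-type inequality $\Delta_H(w)+\Delta_H(v-w)\leq\Delta_H(v)$ is true but not entirely innocuous --- it is not a purely numerical fact and requires that at a point of the wall both pieces (or all Jordan--H\"older factors) be $\sigma_{\alpha,\beta}$-semistable of the same phase; then one combines $\Delta_H \geq 0$ for each factor \cite[Theorem 3.5]{BMS} with the observation that the polarization of the discriminant form is non-negative on pairs of classes with parallel $Z_{\alpha,\beta}$ and non-negative imaginary part. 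You should either cite a precise source or add a couple of lines to this effect; otherwise the reader may mistake the bound for a formal inequality between Chern characters, which it is not. In terms of comparison, the paper's route is shorter and avoids the Bertram bound entirely; yours is more self-contained and makes the numerical structure explicit, which some may find more transparent.
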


\begin{proof}
Let $v=\mathrm{ch}_0+\mathrm{ch}_1+\mathrm{ch}_2$ be a truncated vector in the Mukai lattice. By \cite[Corollary 2.8]{maciocia}, if $\mathrm{ch}_0H^3>0$ and $F>0$, where
\[
F=\frac{(\mathrm{ch}_1H^2)^2-2(\mathrm{ch}_0H^3)(\mathrm{ch}_2H)}{(\mathrm{ch}_0H^3)^2},
\]
then every numerical wall for $v$ intersects the vertical line $\beta=\beta_0$, where
\[
\beta_0=\frac{\mathrm{ch}_1 H^2}{\mathrm{ch}_0 H^3}-\sqrt{F}.
\]
If an actual wall exists, this must intersect this vertical line at some point $(\alpha_0,\beta_0)$ and so, by definition of wall, there exists a pair $(E,E_0)$ of $\sigma_{\alpha_0,\beta_0}$-semistable objects in $\mathrm{Coh}^{\beta_0}(X)$ with $\mathrm{ch}_{\leq 2}(E)=v$, and $E_0$ a subobject of $E$ with $\mu_{\alpha_0,\beta_0}(E_0)=\mu_{\alpha_0,\beta_0}(E)$. As 
\[
\mathrm{ch}_1^{\beta_0}(E)H^2=\mathrm{ch}_1H^2-\beta_0\mathrm{ch}_0H^3 =\sqrt{F} \mathrm{ch}_0H^3>0,
\]
an $\alpha_0$, this gives $\mathrm{Im}(Z_{\alpha_0,\beta_0}(E))>0$. The destabilizing short exact sequence for $(E,E_0)$ and the additivity of $Z_{\alpha_0,\beta_0}$ then imply
\[
0<\mathrm{ch}_1^{\beta_0}(E_0)H^2<\mathrm{ch}_1^{\beta_0}(E)H^2=\sqrt{F} \mathrm{ch}_0H^3.
\]
The truncated Chern character for the spinor bundle $S$ is $2-H$. So $\beta_0=-1$ and $\sqrt{F} \mathrm{ch}_0H^3=2$ and for any potentially destabilizing subobject $E_0$ we get
\[
0<\mathrm{ch}_1^{-1}(E_0)H^2<2.
\]
As $\beta_0=-1$ is an integer, $\mathrm{ch}_1^{-1}(E_0)$ is an element of $H^2(X;\mathbb{Z})=\mathbb{Z}H$, so we have $\mathrm{ch}_1^{-1}(E_0)H=n_0H$ for some integer $n_0$. Therefore, we obtain
\[
0<2n_0<2
\]
which is clearly impossible. So no actual wall can exist for $\mathrm{ch}_{\leq 2}(S)$.
\end{proof}
\begin{corol}
The spinor bundle $S$ on $Q_3$ is a Bridgeland semistable object in $\mathrm{Ku}(Q_3)$ with respect to the stability function $Z_{\alpha,-\frac{1}{2}}$, for any $\alpha>0$. Moreover, $S[1]$ lies in the heart $\mathrm{Coh}_{\alpha,-\frac{1}{2}}^0(Q_3)\cap \mathrm{Ku}(Q_3)$ of $\mathrm{Ku}(Q_3)$ with respect to the stability function $Z_{\alpha,-\frac{1}{2}}$.
\end{corol}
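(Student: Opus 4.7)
The plan is to combine the no-wall lemma (Lemma \ref{nowall}) with a large-volume-limit argument, and then use the four-cases description of the doubly tilted heart (Remark \ref{remark:four-cases}) to place $S[1]$ in $\mathrm{Coh}^0_{\alpha,-1/2}(Q_3)$.

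The first step is to seed $\sigma_{\alpha,\beta}$-semistability of $S$ somewhere. The spinor bundle is a $\mu_H$-stable locally free sheaf with $\mathrm{ch}_0(S) = 2$ and $\mathrm{ch}_1(S) = -H$, so $\mu_H(S) = -\tfrac12$. By a standard large-volume-limit argument (tilt stability asymptotically agrees with $\mu_H$-slope stability as $\alpha \to \infty$ for $\beta < \mu_H(S)$), $S$ is $\sigma_{\alpha,\beta}$-semistable at any fixed $\beta < -\tfrac12$ for $\alpha\gg 0$. Applying Lemma \ref{nowall}, which rules out walls for the truncated Chern vector $2-H$ anywhere in the $(\alpha,\beta)$-half-plane, this semistability propagates to every $(\alpha,\beta)$; in particular $S$ (resp.\ its appropriate shift) is $\sigma_{\alpha,-1/2}$-semistable for every $\alpha>0$.

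Next, I would locate $S[1]$ in the doubly tilted heart. Since $\mu_H(S) = -\tfrac12 \leq -\tfrac12$, the sheaf $S$ belongs to $\mathrm{Coh}(Q_3)_{\mu_H \leq -1/2}$, so $S[1] \in \mathrm{Coh}^{-1/2}_H(Q_3)$. A routine substitution of $\mathrm{ch}(S) = 2 - H + \tfrac{1}{12}H^3$ and $H^3 = 2$ into formula \eqref{eq:weak-stability-function} gives
\[
Z_{\alpha,-1/2}(S) = 2\alpha^2 + \tfrac12,
\]
a strictly positive real number. Therefore $Z_{\alpha,-1/2}(S[1])$ is strictly negative real and has vanishing imaginary part, so the formal slope $\mu_{\alpha,-1/2}(S[1]) = +\infty$ is certainly $>0$. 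Together with $\mu_H(S)\leq -\tfrac12$, we land in case 2 of Remark \ref{remark:four-cases}, whence $S[1] \in \mathrm{Coh}^0_{\alpha,-1/2}(Q_3)$. Combined with Lemma \ref{lemma:S-in-Ku}, this gives $S[1] \in \mathrm{Ku}(Q_3) \cap \mathrm{Coh}^0_{\alpha,-1/2}(Q_3)$, which is the heart of the induced stability condition of Proposition \ref{prop:stability-condition-on-ku}.

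Semistability of $S[1]$ for the induced stability condition on $\mathrm{Ku}(Q_3)$ is then automatic: any destabilizing short exact sequence in the Kuznetsov heart is \emph{a fortiori} a destabilizing short exact sequence in $\mathrm{Coh}^0_{\alpha,-1/2}(Q_3)$, contradicting the $\sigma_{\alpha,-1/2}$-semistability in $\mathcal{D}^b(Q_3)$ already established. The main obstacle I foresee is the seeding step, i.e.\ asserting tilt-semistability of $S$ somewhere in the half-plane; this would be handled by quoting a large-volume-limit statement of \cite{BLMS} or \cite{BMS} type, the hypotheses being satisfied because $S$ is torsion-free, $\mu_H$-stable, and has $\Delta_H(S) = 4 \geq 0$. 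Once this is granted, Lemma \ref{nowall} and Remark \ref{remark:four-cases} do all the remaining work.
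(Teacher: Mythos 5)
Your proof is correct and follows the same route as the paper: seed tilt-semistability of $S$ from $\mu_H$-stability (the paper invokes \cite[Proposition~2.13]{BLMS} for this), propagate across the $(\alpha,\beta)$-half-plane via the no-wall lemma, compute $Z_{\alpha,-1/2}(S)=2\alpha^2+\tfrac12$, and place $S[1]$ in $\mathrm{Coh}^0_{\alpha,-1/2}(Q_3)$ by case~2 of Remark~\ref{remark:four-cases}; all of this matches. One small imprecision in your final paragraph: a destabilizing short exact sequence in the Kuznetsov heart lives in the doubly-tilted heart $\mathrm{Coh}^0_{\alpha,-1/2}(Q_3)$, and this does not a priori give a destabilizing sequence in the once-tilted heart $\mathrm{Coh}^{-1/2}_H(Q_3)$, which is where $\sigma_{\alpha,-1/2}$-semistability is measured, so the contradiction you invoke is not immediate; the cleaner observation is simply that $Z_{\alpha,-1/2}(S[1])$ is a negative real, so $S[1]$ has phase exactly $1$ in the induced stability condition and is therefore automatically semistable (the paper also leaves this last step implicit).
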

\begin{proof}
The spinor bundle $S$ is slope stable (see, e.g, \cite{ottaviani}), so it \sout{is} 
is $\sigma_{\alpha, \beta}$-stable for $\alpha$ sufficiently big and any $\beta$ by \cite[Proposition 2.13]{BLMS}. Since there are no walls in the $(\alpha,\beta)$-semiplane (Lemma \ref{nowall}), we see that $S$ is also $\sigma_{\alpha, \beta}$-stable for any $\alpha,\beta$. As the shift preserves stability, also $S[1]$ is both $\sigma_H$- and $\sigma_{\alpha, \beta}$-semistable.
We have:
\[
Z_H(S)=2+4\sqrt{-1}\,\qquad\text{and}\qquad 
Z_{\alpha, -\frac{1}{2}}(S[1]) = -2\alpha^2 - \frac{1}{2}
\]
which yields $(S[1])[-1]\in \mathrm{Coh}(Q_3)$ with
\[
\mu_H((S[1])[-1])=-\frac{1}{2}\leq -\frac{1}{2}; \qquad \mu_{\alpha,-\frac{1}{2}}(S[1])=+\infty
\]
and so, by Remark \ref{remark:four-cases}, $S[1]\in \mathrm{Coh}_{\alpha,-\frac{1}{2}}(Q_3)$.
\end{proof}
This suggests the following definition, of which the spinor bundle on $Q_3$ is an example.
\begin{definition}
Let $\sigma=(\mathcal{A},Z)$ be a numerical stability condition on a numerically finite triangulated category $\mathcal{D}$ with a Serre functor $\mathbb{S}$. A $\sigma$-semistable object $E$ in $\mathcal{D}$ is called a $d$-codimensional \emph{Bridgeland point object} for $\sigma$ if
\begin{enumerate}
    \item $\mathbb{S}(E)\cong E[d]$;
    \item the numerical class $[E]$ of $E$ is indecomposable in the effective numerical Cone $K^+_{\mathrm{num}}(\mathcal{D}_{\phi_E})$ of $\mathcal{D}_{\phi_E}$, i.e., in the image of $\mathcal{D}_{\phi_E}$ in $K_{\mathrm{num}}(\mathcal{D})$, where $\phi_E$ is the phase of $E$ with respect to the stability function $Z$, and $\mathcal{D}_{\phi_E}\subseteq \mathcal{D}$ is the associated slice.
\end{enumerate}
\end{definition}
\begin{lemma}\label{lemma:schur-type}
A $d$-codimensional Bridgeland point object is a $d$-codimensional Bondal-Orlov point object.
\end{lemma}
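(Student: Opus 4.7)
The plan is to verify, for a $d$-codimensional Bridgeland point object $E$, the three defining conditions of Definition \ref{def:BO-point-objects}: (i) $\mathrm{Hom}(E,E) = \mathbb{C}$, (ii) $\mathrm{Hom}(E,E[n])=0$ for $n<0$, and (iii) $\mathbb{S}(E) \cong E[d]$. Condition (iii) is literally part of the definition of a Bridgeland point object, so there is nothing to prove. For condition (ii), I will invoke the standard vanishing for semistable objects in a Bridgeland stability condition: if $A$ and $B$ are semistable with $\phi(A) > \phi(B)$, then $\mathrm{Hom}(A,B)=0$. Since the shift raises phases by $1$, for $n<0$ the object $E[n]$ is semistable of phase $\phi_E + n < \phi_E$, so $\mathrm{Hom}(E,E[n])=0$.

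The heart of the proof is condition (i). The strategy is to upgrade semistability to stability by exploiting the indecomposability of $[E]$, and then to invoke a Schur-type argument in the slice. Concretely, $E$ is an object of the abelian slice $\mathcal{P}(\phi_E)$, which is of finite length. I would take a Jordan--H\"older filtration of $E$ in $\mathcal{P}(\phi_E)$, with stable factors $F_1,\dots,F_k$ all of phase $\phi_E$. In a numerical Bridgeland stability condition, the central charge of a nonzero semistable object does not vanish; in particular, each class $[F_i]$ is a nonzero element of the effective numerical cone $K^+_{\mathrm{num}}(\mathcal{D}_{\phi_E})$. Additivity of the class in short exact sequences in the slice then gives $[E] = [F_1] + \cdots + [F_k]$, and indecomposability of $[E]$ in this cone forces $k=1$, so $E$ itself is stable. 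Now any nonzero endomorphism $f\colon E\to E$ has kernel and image sitting as sub-objects of $E$ in $\mathcal{P}(\phi_E)$, hence either zero or the whole of $E$; this forces $f$ to be an isomorphism. So $\mathrm{End}_{\mathcal{D}}(E)$ is a finite-dimensional division algebra over the algebraically closed field $\mathbb{C}$, and thus equals $\mathbb{C}$.

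The hard part is really the upgrade from semistability to stability. It rests on the fact that no nonzero semistable factor can have vanishing numerical class, which is precisely the positivity (support) axiom of a bona fide Bridgeland stability condition. This is also where the hypothesis that $[E]$ be indecomposable in the \emph{effective} numerical cone (rather than merely primitive in $K_{\mathrm{num}}(\mathcal{D})$) is used in full: it rules out decompositions $[E] = \sum [F_i]$ that would correspond to a nontrivial Jordan--H\"older filtration in the slice, and so forces $E$ to be genuinely stable rather than strictly semistable.
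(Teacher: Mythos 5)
Your proof is correct, but it takes a mildly different route in establishing $\mathrm{End}_{\mathcal{D}}(E)\cong\mathbb{C}$ than the paper does. You first upgrade semistability to stability: you pass to a Jordan--H\"older filtration of $E$ in the slice $\mathcal{P}(\phi_E)$, use the support/positivity property to conclude that each stable factor has nonzero class in $K^+_{\mathrm{num}}(\mathcal{D}_{\phi_E})$, and then invoke indecomposability of $[E]$ to force a single factor, i.e.\ $E$ stable, hence simple in the slice, hence Schur. The paper skips the detour through Jordan--H\"older and applies the indecomposability hypothesis \emph{directly} to the kernel and image of an arbitrary endomorphism $f\colon E\to E$ in the slice: from $[E]=[\ker_{\phi_E}(f)]+[E/\ker_{\phi_E}(f)]$ in $K^+_{\mathrm{num}}(\mathcal{D}_{\phi_E})$ and the fact that nonzero semistable objects have nonzero numerical class, one of the summands vanishes, so $\ker f$ is $0$ or all of $E$; similarly for the image, whence $f$ is $0$ or an isomorphism. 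Both arguments rest on the same positivity input and on $[E]$ being indecomposable in the \emph{effective} cone; your version has the modest advantage of exhibiting the stronger fact that $E$ is actually $\sigma$-stable, while the paper's is more economical and does not need finiteness of Jordan--H\"older factorizations in the slice. The treatment of conditions (ii) and (iii) is the same in both.
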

\begin{proof}
Let $E$ be a Bridgeland point object. The condition $\mathbb{S}(E)\cong E[d]$ is true by definition, while the Hom-vanishing $\mathrm{Hom}_{\mathcal{D}}(E,E[n])=0$ for every $n<0$ comes from the defining properties of the slicing $\{\mathcal{D}_\phi\}_{\phi\in \mathbb{R}}$ associated to the stability condition $(\mathcal{A},Z)$. To see that $\mathrm{Hom}_{\mathcal{D}}(E,E)\cong \mathbb{C}$ we use that the slice $\mathcal{D}_{\phi_E}$ is a full abelian subcategory of $\mathcal{D}$ and prove that 
\[
\mathrm{End}_{\mathcal{D}_{\phi_E}}(E)\cong \mathbb{C}
\]
by a Schur lemma-type argument.
The category $\mathcal{D}_{\phi_E}$ is abelian, hence for every morphism $f\colon E \to E$ we have a short exact sequence 
\[
0\to \mathrm{ker}_{\phi_E}(f)\to E \to E/\mathrm{ker}_{\phi_E}(f)\to 0
\]
in $\mathcal{D}_{\phi_E}$. From this we get
\[
[E]=[\mathrm{ker}_{\phi_E}(f)]+[E/\mathrm{ker}_{\phi_E}(f)]
\]
in $K^+_{\mathrm{num}}(\mathcal{D}_{\phi_E})$. Since in presence of a numerical stability condition a semistable object (i.e., an object in a slice $\mathcal{D}_\phi$ for some $\phi$) can not have zero numerical class unless it is the zero object, the fact that $E$ is a Bridgeland point object, and that the stability condition $\sigma$ is numerical, implies
$\mathrm{ker}_{\phi_E}(f)=0$ or $\mathrm{ker}_{\phi_E}(f)=E$. Arguing in the same way for the image of $f$ in  $\mathcal{D}_{\phi_E}$ one concludes that $f$ is either zero or an isomorphism, i.e., that $\mathrm{End}_{\mathcal{D}_{\phi_E}}(E)$ is a division ring. As we are working over the algebraically closed field $\mathbb{C}$, this implies $\mathrm{End}_{\mathcal{D}_{\phi_E}}(E)\cong \mathbb{C}$.
\end{proof}

\begin{remark}
Since $Z(K^+_{\mathrm{num}}(\mathcal{D}_{\phi_E}))\subseteq Z(K_{\mathrm{num}}(\mathcal{D}))\cap \mathbb{R}_{> 0}e^{\pi i\phi_E}$, one sees that if $Z(E)$ is indecomposable in $Z(K_{\mathrm{num}}(\mathcal{D})\cap \mathbb{R}_{> 0}e^{\pi i\phi_E}$, then surely $[E]$ is indecomposable in $K^+_{\mathrm{num}}(\mathcal{D}_{\phi_E})$. In concrete situations this often allows us to check the numerical indecomposability condition in the definition of Bridgeland point object via simple considerations on vectors in $\mathbb{C}$.
\end{remark}

\bigskip

The fact that $S$ is an exceptional object in ${\mathcal{D}^b}(Q_3)$ can be equivalently stated by saying that the Fourier-Mukai transform
\[
\Phi^S_{\mathcal{M}_S\to Q_3}\colon {\mathcal{D}^b}(\mathcal{M}_{S})\to {\mathcal{D}^b}(Q_3),
\]
where $\mathcal{M}_S$ is the one-point moduli space consisting of the equivalence class of the object $S$ alone, is fully faithful. While this way of formulating the exceptionality of $S$ is surely an overkill, it allows us to see the use we made of the property of $S$ of being a point object in $\mathrm{Ku}(Q_3)$ as a particular instance of the following Proposition, which is a reformulation of \cite[Theorem 5.1]{Bri98}.
\begin{prop}\label{prop:fully-faithful}
Let $X$ be an smooth projective variety, and let $\mathcal{K}$ be an admissible triangulated subcategory of ${\mathcal{D}^b}(X)$ endowed with a numerical stablity condition. Assume there exists a $d$-dimensional smooth projective variety which is a fine moduli space $\mathcal{M}_v$ of $d$-codimensional Bridgeland point objects in $\mathcal{A}_\mathcal{K}\subseteq \mathcal{K}$ of numerical class $v$ and let $\mathcal{E}\in {\mathcal{D}^b}(\mathcal{M}_v\times X)$ be the corresponding universal family. 
If for every two nonismorphic objects $E_{y_1}$ and $E_{y_2}$ parametrised by $\mathcal{M}_v$ one has
\begin{equation}\label{eq:orthogonality}
\mathrm{Hom}_{{\mathcal{D}^b}(X)}(E_{y_1},E_{y_2}[n])=0, \qquad\text{for $0<n<d$}, 
\end{equation}
then
\[
\Phi_{\mathcal{M}_v\to X}^\mathcal{E}\colon {\mathcal{D}^b}(\mathcal{M}_v)\to {\mathcal{D}^b}(X)
\]
is fully faithful.
\end{prop}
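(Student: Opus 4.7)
The plan is to reduce to Bridgeland's classical fullness criterion \cite[Theorem 5.1]{Bri98}: a Fourier--Mukai transform $\Phi^{\mathcal{E}}_{Y\to X}\colon\mathcal{D}^b(Y)\to\mathcal{D}^b(X)$ between smooth projective varieties is fully faithful precisely when, writing $E_y=\Phi^{\mathcal{E}}(\mathcal{O}_y)$, one has (a) $\mathrm{End}_{\mathcal{D}^b(X)}(E_y)=\mathbb{C}$ for every closed point $y\in Y$, (b) $\mathrm{Hom}_{\mathcal{D}^b(X)}(E_y,E_y[n])=0$ for $n<0$ and for $n>\dim Y$, and (c) $\mathrm{Hom}_{\mathcal{D}^b(X)}(E_{y_1},E_{y_2}[n])=0$ for every pair $y_1\neq y_2$ and every $n\in\mathbb{Z}$. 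In our setting $Y=\mathcal{M}_v$ has dimension $d$ and the objects $E_y$ are precisely the Bridgeland point objects of class $v$ parametrised by $\mathcal{M}_v$, so the whole question reduces to verifying (a)--(c) for this family.

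First I would dispose of (a) by invoking Lemma \ref{lemma:schur-type}, which already tells us that every Bridgeland point object is a Bondal--Orlov point object, and in particular satisfies $\mathrm{End}(E_y)=\mathbb{C}$. For (b), the vanishing when $n<0$ is immediate from the slicing attached to the numerical stability condition on $\mathcal{K}$, since $E_y$ lies in the slice $\mathcal{K}_{\phi_E}$; the vanishing when $n>d$ then follows from Serre duality inside the admissible subcategory $\mathcal{K}$ (see Remark \ref{rem:admissibility-2}) combined with the point-object identity $\mathbb{S}_{\mathcal{K}}(E_y)\cong E_y[d]$, since one may rewrite $\mathrm{Hom}(E_y,E_y[n])\cong \mathrm{Hom}(E_y,E_y[d-n])^\vee$ and then invoke the slicing vanishing again for $d-n<0$.

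The substantive content is (c), and the observation that drives the whole argument is that, because $\sigma$ is numerical and every $E_y$ has the same numerical class $v$, all the $E_y$ share a common phase $\phi_E$ and so lie together in the abelian slice $\mathcal{K}_{\phi_E}$. Granting this, I would split the verification according to $n$: for $n<0$ the vanishing is a slicing property, for $0<n<d$ it is precisely the hypothesis (\ref{eq:orthogonality}), and for $n\geq d$ Serre duality gives $\mathrm{Hom}(E_{y_1},E_{y_2}[n])\cong \mathrm{Hom}(E_{y_2},E_{y_1}[d-n])^\vee$, which falls into one of the previous ranges or, for $n=d$, into the case $n=0$ with the roles of $y_1,y_2$ exchanged. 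The delicate point, which I expect to be the main obstacle, is exactly this case $n=0$. The argument I have in mind mirrors the proof of Lemma \ref{lemma:schur-type}: any morphism $f\colon E_{y_1}\to E_{y_2}$ in the abelian slice $\mathcal{K}_{\phi_E}$ gives $[E_{y_1}]=[\ker f]+[\mathrm{im}\,f]$ in the effective numerical cone $K^+_{\mathrm{num}}(\mathcal{K}_{\phi_E})$; indecomposability of $v$ forces one summand to be zero in this cone, and because $\sigma$ is numerical a nonzero object in the slice has nonzero numerical class, so $f$ is either zero or an isomorphism, the latter being excluded by $E_{y_1}\not\cong E_{y_2}$. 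Once (a)--(c) are in place, Bridgeland's criterion immediately yields full faithfulness of $\Phi^{\mathcal{E}}_{\mathcal{M}_v\to X}$.
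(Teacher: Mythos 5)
Your proof is correct and takes essentially the same route as the paper's: reduce to Bridgeland's criterion from \cite[Theorem 5.1]{Bri98}, dispose of the endomorphism condition via Lemma~\ref{lemma:schur-type}, use the slicing for negative shifts and Serre duality for shifts $\geq d$, and handle the $n=0$ case for distinct points by the same Schur-type indecomposability argument in the common abelian slice. The paper's proof is terser but all the key steps coincide, including the implicit use of the hypothesis (\ref{eq:orthogonality}) for the range $0<n<d$, which you make explicit.
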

\begin{proof}
Let $E_y$ be the object of $\mathcal{K}$ corresponding to the point $y$ in $\mathcal{M}_v$. We have, by definition of universal family, that $E_y=\Phi_{\mathcal{M}_v\to X}^\mathcal{E}(\mathcal{O}_y)$. Therefore, since $\mathcal{K}$ is a full subcategory of ${\mathcal{D}^b}(X)$, in order to show that the assumptions of \cite[Theorem 5.1]{Bri98} are satisfied we need to show that
\begin{enumerate}
    \item for any point $y\in \mathcal{M}_v$ one has $\mathrm{Hom}_{\mathcal{K}}(E_y,E_y)=\mathbb{C}$;
    \item for any point $y\in \mathcal{M}_v$ one has $\mathrm{Hom}_{\mathcal{K}}(E_y,E_y[n])=0$ for any $n<0$ and any $n>d$;
    \item for any two distinct points $y_1,y_2\in \mathcal{M}_v$ one has $\mathrm{Hom}_{\mathcal{K}}(E_{y_1},E_{y_2}[n])=0$, for any $n\in \mathbb{Z}$
\end{enumerate}
Let $\mathcal{K}_\lambda\subseteq \mathcal{A}_{\mathcal{K}}$ be the slice of $\mathcal{K}$ corresponding to the numerical class $v$. As the slice $\mathcal{K}_\lambda$ is a full abelian subcategory of $\mathcal{K}$, we have
\[
\mathrm{Hom}_{\mathcal{K}}(E_{y_1},E_{y_2})=\begin{cases}
\mathbb{C}&\text{if $y_1=y_2$}\\
0&\text{if $y_1\neq y_2$}
\end{cases}
\]
by the same Schur's Lemma argument as in Lemma \ref{lemma:schur-type}. By the properties of slicings, one has  $\mathrm{Hom}_{\mathcal{K}}(E_{y_1},E_{y_2}[n])=0$, for any $y_1,y_2$ in $\mathcal{M}_v$ and any $n<0$. One then concludes by Serre duality in $\mathcal{K}$.
\end{proof}
\begin{remark}\label{rem:addington-thomas}
Notice that the orthogonality condition (\ref{eq:orthogonality}) is empty if $d\leq 1$. That is, derived categories of $d$-dimensional moduli spaces of Bridgeland stable $d$-codimensional point objects of $\mathcal{K}\subseteq {\mathcal{D}^b}(X)$ are automatically fully faithfully embedded in ${\mathcal{D}^b}(X)$ when $d=0,1$. Examples of this phenomenon are the 0-dimensional moduli space of the spinor bundle on the smooth quadric threefold $Q_3$ (consisting of a single point), and the 1-dimensional moduli space of spinor bundles on the intersection $Y_4$ of two quadrics in $\mathbb{P}^5$ (which is a genus 2 curve). For $d=2$ the orthogonality condition (\ref{eq:orthogonality}) reduces to the numerical condition $\chi(v,v)=0$. This is notably the case for the numerical classes of a cubic fourfold $W_4$ leading an equivalence between the Kuznetsov component $\mathrm{Ku}(W_4)$ and the derived category of a (possibly twisted) K3 surface \cite{addington-thomas,huybrechts,BLMNPS}
\end{remark}

\section{Stability conditions loathe phantomic summands}
We will now show that the existence of a stability condition on a numerically finite triangulated category $\mathcal{D}$ with a Serre functor automatically excludes the possibility that $\mathcal{D}$ has a nontrivial orthogonal decomposition $\mathcal{D}=\mathcal{D}_1\oplus\mathcal{D}_2$ such that one of the two summands is numerically trivial. This fact will be used to prove the fullness of the exceptional collection $(S,\mathcal{O}_{Q_3},\mathcal{O}_{Q_3}(1),\mathcal{O}_{Q_3}(2))$ in Section \ref{sec:fullness}. Most of the proofs in this Section are straightforward and can be skipped; we will only write them for the sake of completeness.

\bigskip 

Recall that a semiorthogonal decomposition $\mathcal{D}=\langle\mathcal{D}_1,\mathcal{D}_2\rangle$ is called \emph{orthogonal} if one has
\[
\mathrm{Hom}_{\mathcal{D}}(E_1,E_2)=0
\]
for any $E_1\in \mathcal{D}_1$ and $E_2\in \mathcal{D}_2$. When this happens we write 
\[ \mathcal{D} = \mathcal{D}_1 \oplus \mathcal{D}_2.  \]
The following is immediate from the definitions.
\begin{lemma}\label{lemma:orthogonal}
Let $\mathcal{A}\hookrightarrow \mathcal{B}$ be an admissible subcategory of the  triangulated  category $\mathcal{B}$. If $\mathcal{A}^\perp\subseteq {}^\perp\mathcal{A}$, then the semiorthogonal decomposition $\mathcal{B}=\langle \mathcal{A}^\perp,\mathcal{A}\rangle$ is an orthogonal decomposition.

\end{lemma}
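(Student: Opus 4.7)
The plan is to simply unwind the definitions, since no real argument is needed beyond that. Because $\mathcal{A}$ is admissible in $\mathcal{B}$, Remark \ref{rem:admissibility} provides the semiorthogonal decomposition $\mathcal{B}=\langle \mathcal{A}^\perp,\mathcal{A}\rangle$. By the very definition of the right orthogonal $\mathcal{A}^\perp$, this semiorthogonal decomposition already yields one of the two required Hom-vanishings, namely
\[
\mathrm{Hom}_{\mathcal{B}}(E,F)=0 \qquad \text{for every } E\in \mathcal{A},\ F\in \mathcal{A}^\perp.
\]

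To promote this semiorthogonal decomposition to an orthogonal decomposition in the sense defined above, one only needs the vanishing in the opposite direction, namely $\mathrm{Hom}_{\mathcal{B}}(F,E)=0$ for every $F\in \mathcal{A}^\perp$ and every $E\in \mathcal{A}$. This is immediate from the hypothesis $\mathcal{A}^\perp\subseteq {}^\perp\mathcal{A}$: any $F\in \mathcal{A}^\perp$ is then also an object of ${}^\perp\mathcal{A}$, which by definition is exactly the subcategory of objects whose Hom into every object of $\mathcal{A}$ vanishes. Combining the two vanishings gives $\mathcal{B}=\mathcal{A}^\perp\oplus \mathcal{A}$, as claimed.

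There is really no obstacle in this argument; the lemma is an elementary bookkeeping statement whose only purpose is to be quoted later, when one wants to upgrade a semiorthogonal decomposition of $\mathcal{D}^b(Q_3)$ coming from the exceptional collection to an actual direct sum decomposition that can be excluded by the existence of a stability condition on $\mathrm{Ku}(Q_3)$.
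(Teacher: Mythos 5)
Your proof is correct and is exactly the definitional unwinding the paper has in mind; the paper itself gives no proof for this lemma, merely labeling it ``immediate from the definitions,'' and your two-line argument supplies that content faithfully.
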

An immediate application of \cite[Lemma 3.4]{Bri98} gives the following.
\begin{lemma}
Let $\mathcal{D} = \mathcal{D}_1 \oplus \mathcal{D}_2$
be a orthogonal decomposition of a triangulated category $\mathcal{D}$. Then every object $X$ in $\mathcal{D}$ can be decomposed as a biproduct $X=X_1\oplus X_2$, with $X_i\in \mathcal{D}_i$. Moreover, this decomposition is unique up to isomorphism. 
\end{lemma}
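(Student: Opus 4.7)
The plan is to combine the existence-of-triangle half of \cite[Lemma 3.4]{Bri98} with the observation that, under full orthogonality, the connecting morphism of that triangle must vanish, forcing it to split as a biproduct. Uniqueness will then be a standard semiorthogonality argument.

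First I would invoke \cite[Lemma 3.4]{Bri98} for the semiorthogonal decomposition $\mathcal{D} = \langle \mathcal{D}_1, \mathcal{D}_2\rangle$ underlying the orthogonal decomposition: every object $X \in \mathcal{D}$ fits into a distinguished triangle
\[
X_2 \to X \to X_1 \xrightarrow{\delta} X_2[1]
\]
with $X_1 \in \mathcal{D}_1$ and $X_2 \in \mathcal{D}_2$. Next I would observe that the orthogonality hypothesis $\mathrm{Hom}_{\mathcal{D}}(E_1, E_2) = 0$ for all $E_i \in \mathcal{D}_i$, applied to the shifted object $X_2[1] \in \mathcal{D}_2$ (which is a triangulated subcategory), forces $\delta \in \mathrm{Hom}_{\mathcal{D}}(X_1, X_2[1]) = 0$. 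A distinguished triangle with vanishing connecting morphism splits, so $X \cong X_1 \oplus X_2$ in $\mathcal{D}$.

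For uniqueness, suppose $X \cong X_1' \oplus X_2'$ is another such decomposition with $X_i' \in \mathcal{D}_i$. Applying $\mathrm{Hom}_{\mathcal{D}}(-, Y)$ for $Y \in \mathcal{D}_2$ to the biproduct $X_1 \oplus X_2 \cong X_1' \oplus X_2'$ and using that $\mathrm{Hom}_{\mathcal{D}}(X_1, Y) = \mathrm{Hom}_{\mathcal{D}}(X_1', Y) = 0$ by orthogonality, one identifies $X_2$ and $X_2'$ up to isomorphism as the $\mathcal{D}_2$-projections of $X$ (equivalently, as representing the same functor on $\mathcal{D}_2$). The same argument with $\mathcal{D}_1$ pins down $X_1 \cong X_1'$, giving uniqueness up to isomorphism.

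I do not expect a genuine obstacle: the content is that orthogonality, as opposed to mere semiorthogonality, trivialises the extension class classifying the triangle of \cite[Lemma 3.4]{Bri98}. The only subtlety worth spelling out is that orthogonality must be used against $X_2[1]$, which lies in $\mathcal{D}_2$ by triangulated closure, so the vanishing statement in the hypothesis does apply.
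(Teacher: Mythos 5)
Your proposal is correct and matches the paper's approach: the paper simply says the lemma is ``an immediate application of \cite[Lemma 3.4]{Bri98},'' and your argument (take the semiorthogonal decomposition triangle, observe that full orthogonality kills the connecting morphism so the triangle splits, then deduce uniqueness by testing against each $\mathcal{D}_i$) is exactly the content of that application. The only tiny attribution slip is that the existence of the decomposition triangle is part of the definition of a semiorthogonal decomposition rather than something supplied by Bridgeland's lemma, but this does not affect the correctness of the argument.
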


From this, we obtain that $t$-structures are well-behaved with respect to orhogoanl decompositions.
\begin{lemma}\label{lemma:t-structure-on-decomposition}
Let $\mathcal{D} = \mathcal{D}_1 \oplus \mathcal{D}_2$
be a orthogonal decomposition of a triangulated category $\mathcal{D}$, and let
$\langle \mathcal{D}_{\leq 0}, \mathcal{D}_{\geq 0} \rangle$ be a $t$-structure  on $\mathcal{D}$. Then 
\[ \mathcal{D}_{i;\leq 0} = \mathcal{D}_{\leq 0} \cap \mathcal{D}_i , \qquad \mathcal{D}_{i;\geq 0} = \mathcal{D}_{\geq 0} \cap \mathcal{D}_i
\]
is a $t$-structure on $\mathcal{D}_i$ for $i=1,2$. 
\end{lemma}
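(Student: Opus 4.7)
The plan is to verify directly the three defining axioms of a $t$-structure for $(\mathcal{D}_{i;\leq 0}, \mathcal{D}_{i;\geq 0})$ in $\mathcal{D}_i$, for a fixed $i \in \{1,2\}$. The shift-invariance $\mathcal{D}_{i;\leq 0}[1] \subseteq \mathcal{D}_{i;\leq 0}$ and $\mathcal{D}_{i;\geq 0}[-1] \subseteq \mathcal{D}_{i;\geq 0}$ is immediate, since $\mathcal{D}_i$ is a triangulated subcategory of $\mathcal{D}$ stable under shifts and the corresponding inclusions already hold in $\mathcal{D}$. The Hom-vanishing $\mathrm{Hom}_{\mathcal{D}_i}(\mathcal{D}_{i;\leq 0}, \mathcal{D}_{i;\geq 1}) = 0$ is likewise a free consequence of the analogous vanishing in $\mathcal{D}$, because $\mathcal{D}_i \hookrightarrow \mathcal{D}$ is a full embedding.

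The only real content is the existence of the truncation triangle. My approach will be to take the canonical truncation triangle
\[
A \to X \to B \xrightarrow{+1} A[1]
\]
of $X \in \mathcal{D}_i$ in the ambient category $\mathcal{D}$, with $A \in \mathcal{D}_{\leq 0}$ and $B \in \mathcal{D}_{\geq 1}$, and show that both $A$ and $B$ already belong to $\mathcal{D}_i$. Using the previous lemma I decompose $A = A_1 \oplus A_2$ and $B = B_1 \oplus B_2$ along $\mathcal{D} = \mathcal{D}_1 \oplus \mathcal{D}_2$. Additivity of the cohomology functors $H^j$ of the $t$-structure shows at once that $\mathcal{D}_{\leq 0}$ and $\mathcal{D}_{\geq 1}$ are closed under direct summands, whence $A_j \in \mathcal{D}_{j;\leq 0}$ and $B_j \in \mathcal{D}_{j;\geq 1}$.

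Assume without loss of generality $i=1$. Orthogonality of the decomposition forces the cross-morphisms $A_2 \to X$ and $X \to B_2$ to vanish, and from this I would argue that the canonical triangle splits as a biproduct of two distinguished triangles
\[
A_1 \to X \to B_1 \xrightarrow{+1} A_1[1], \qquad A_2 \to 0 \to B_2 \xrightarrow{+1} A_2[1].
\]
The second triangle identifies $B_2 \cong A_2[1] \in \mathcal{D}_{\leq -1}$, but also $B_2 \in \mathcal{D}_{\geq 1}$, so the Hom-vanishing axiom of the ambient $t$-structure collapses both $A_2$ and $B_2$ to zero. The surviving first triangle lives entirely in $\mathcal{D}_1$ and provides the required truncation.

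The step I expect to be most delicate is the biproduct splitting of the canonical truncation triangle, which is the only point where the hypothesis of \emph{orthogonality} (rather than merely semiorthogonality) of the decomposition is genuinely used: it is precisely the vanishing of all cross-Homs between $\mathcal{D}_1$ and $\mathcal{D}_2$ that forces the off-diagonal components of the triangle's morphisms to vanish, and thereby makes the biproduct decomposition of the triangle itself available via TR1--TR4.
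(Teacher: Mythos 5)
Your proof is correct and takes essentially the same approach as the paper: both decompose the ambient truncation along $\mathcal{D}=\mathcal{D}_1\oplus\mathcal{D}_2$ and use orthogonality to force the pieces into the correct summands. You are somewhat more explicit than the paper, which merely defines $\tau_{i;\leq 0}$ as the composite $\mathcal{D}_i\hookrightarrow\mathcal{D}\xrightarrow{\tau_{\leq 0}}\mathcal{D}_{\leq 0}\xrightarrow{\pi_i}\mathcal{D}_{i;\leq 0}$ and leaves the triangle axiom implicit; you instead show directly that the ambient truncation triangle of $X\in\mathcal{D}_i$ already lies in $\mathcal{D}_i$. As for the step you flag as delicate, you can avoid invoking the biproduct decomposition of a distinguished triangle altogether: the projection $\pi_2\colon\mathcal{D}\to\mathcal{D}_2$ is an exact functor (it is adjoint to the exact inclusion $\mathcal{D}_2\hookrightarrow\mathcal{D}$), so applying $\pi_2$ to the truncation triangle $A\to X\to B\xrightarrow{+1}A[1]$ directly yields a distinguished triangle $A_2\to 0\to B_2\xrightarrow{+1}A_2[1]$ in $\mathcal{D}_2$; this gives $B_2\cong A_2[1]$, and then the Hom-vanishing between $\mathcal{D}_{\leq -1}$ and $\mathcal{D}_{\geq 1}$ kills both, exactly as you conclude.
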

\begin{proof}

One easily sees that
\[
\mathcal{D}_{\leq 0}=\mathcal{D}_{1;\leq 0}\oplus \mathcal{D}_{2;\leq 0}.
\]
Namely, an object $X\in \mathcal{D}_{\leq 0}\subseteq \mathcal{D}$ can be written as $X=X_1\oplus X_2$ with $X_i\in \mathcal{D}_{i}$. For any $Y\in \mathcal{D}_{\geq 1}$ one has
\[
0=\mathrm{Hom}_{\mathcal{D}}(Y,X)=\mathrm{Hom}_{\mathcal{D}}(Y,X_1)\oplus \mathrm{Hom}_{\mathcal{D}}(Y,X_2),
\]
and so $X_i\in \mathcal{D}_{\leq 0}$ for $i=1,2$. Similarly, one shows $ \mathcal{D}_{\geq 0}=\mathcal{D}_{1;\geq 0}\oplus \mathcal{D}_{2;\geq 0}$.
Defining the truncation functor $\tau_{i;\leq0}\colon \mathcal{D}_i\to \mathcal{D}_{i;\leq 0}$ as the composition
\[
\mathcal{D}_i\hookrightarrow \mathcal{D}\xrightarrow{\tau_{\leq 0}} \mathcal{D}_{\leq 0} \xrightarrow{\pi_i}\colon \mathcal{D}_{i;\leq 0},
\]
where $\pi_i$ is the projection functor, and similarly for $\tau_{i;\geq0}$, one sees that $(\mathcal{D}_{i;\leq 0},\allowbreak \mathcal{D}_{i;\geq 0})$ is a $t$-structure on $\mathcal{D}_i$.

\end{proof}
\begin{remark}
In the setup of the above lemma, if $\mathcal{A}$ denotes the heart of $\langle \mathcal{D}_{\leq 0},\allowbreak \mathcal{D}_{\geq 0} \rangle$, one immediately sees that the heart of the induced $t$-structure $\langle \mathcal{D}_{i;\leq 0},\allowbreak \mathcal{D}_{i;\geq 0} \rangle$ is $\mathcal{A}_i=\mathcal{A}\cap \mathcal{D}_i$. Moreover one has $\mathcal{A}=\mathcal{A}_1\oplus \mathcal{A}_2$.
\end{remark}
\begin{lemma}\label{lemma:bounded}
In the same assumptions as in Lemma \ref{lemma:t-structure-on-decomposition}, if the t-structure $\langle \mathcal{D}_{\leq 0}, \allowbreak\mathcal{D}_{\geq 0} \rangle$ on $\mathcal{D}$ is bounded, then the same is true for the induced $t$-structure $\langle \mathcal{D}_{i;\leq 0},\allowbreak \mathcal{D}_{i;\geq 0} \rangle$ on $\mathcal{D}$, for $i=1,2$.
\end{lemma}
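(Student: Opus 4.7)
The plan is to unfold the definition of boundedness for a $t$-structure and use the fact that the induced $t$-structure on $\mathcal{D}_i$ is defined precisely as the intersection with the ambient $t$-structure. Recall that a $t$-structure $\langle \mathcal{D}_{\leq 0},\mathcal{D}_{\geq 0}\rangle$ on $\mathcal{D}$ is bounded if for every object $X\in \mathcal{D}$ there exists some integer $N\geq 0$ such that $X\in \mathcal{D}_{\leq N}\cap \mathcal{D}_{\geq -N}$, equivalently if the iterated cohomology objects $H^n(X)$ with respect to the heart vanish for $|n|\gg 0$.

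First, I would fix an object $X_i\in \mathcal{D}_i$. Since $\mathcal{D}_i$ is a triangulated subcategory of $\mathcal{D}$, we may view $X_i$ as an object of $\mathcal{D}$ and invoke the boundedness hypothesis on the ambient $t$-structure: there exists $N\geq 0$ with $X_i\in \mathcal{D}_{\leq N}\cap \mathcal{D}_{\geq -N}$.

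Next, I would combine this membership with the assumption $X_i\in \mathcal{D}_i$ to conclude $X_i\in \mathcal{D}_i\cap \mathcal{D}_{\leq N}=\mathcal{D}_{i;\leq N}$ and analogously $X_i\in \mathcal{D}_{i;\geq -N}$, which is exactly the boundedness of the induced $t$-structure on $\mathcal{D}_i$ at the object $X_i$. Since $X_i$ was arbitrary, this yields boundedness of $\langle \mathcal{D}_{i;\leq 0},\mathcal{D}_{i;\geq 0}\rangle$.

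There is really no serious obstacle here: the whole argument reduces to unwinding definitions once the description $\mathcal{D}_{i;\leq 0}=\mathcal{D}_{\leq 0}\cap \mathcal{D}_i$ and $\mathcal{D}_{i;\geq 0}=\mathcal{D}_{\geq 0}\cap \mathcal{D}_i$ established in Lemma \ref{lemma:t-structure-on-decomposition} is in place. If anything, the only point worth making explicit is that the inclusion $\mathcal{D}_i\hookrightarrow \mathcal{D}$ is triangulated, so that ``belonging to $\mathcal{D}_{\leq N}$'' has the same meaning whether interpreted in $\mathcal{D}$ or via the induced truncation functors on $\mathcal{D}_i$; this is precisely what the identifications in the proof of Lemma \ref{lemma:t-structure-on-decomposition} guarantee.
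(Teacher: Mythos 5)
Your argument is correct and follows the same idea as the paper's proof: since an object of $\mathcal{D}_i$ is also an object of $\mathcal{D}$, boundedness in $\mathcal{D}$ together with the identification $\mathcal{D}_{i;\leq N}=\mathcal{D}_{\leq N}\cap\mathcal{D}_i$ (and dually) from Lemma \ref{lemma:t-structure-on-decomposition} immediately yields boundedness in $\mathcal{D}_i$. The paper phrases this via the truncation functors $\tau_{i;\leq n}=\tau_{\leq n}$ restricted to $\mathcal{D}_i$ rather than via membership in the truncated subcategories, but the content is identical.
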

\begin{proof}
From the definition of the truncation functors $\tau_{i;\leq0}, \tau_{i\geq 0}$ in the proof of Lemma \ref{lemma:t-structure-on-decomposition}, and 
since the $t$-structure on $\mathcal{D}$ is bounded, for any $E_i\in \mathcal{D}_i\subseteq \mathcal{D}$, we have
\[
\tau_{i;\geq n}(E_i)=\tau_{\geq n}(E_i)= 0, \qquad \text{for $n>\!>0$}
\]
and
\[
\tau_{i;\leq n}(E_1)=\tau_{\leq n}(E_i)= 0, \qquad \text{for $n<\!<0$}
\]
which shows that the induced $t$-structure on $\mathcal{D}_i$ is bounded.

\[
\tau_{i;\geq n}(E_1)=\tau_{\geq n}(E_1)= 0, \qquad \text{for $n>\!>0$}
\]
and
\[
\tau_{i;\leq n}(E_1)=\tau_{\leq n}(E_1)= 0, \qquad \text{for $n<\!<0$}
\]
which shows that the induced $t$-structure on $\mathcal{D}_1$ is bounded.
\end{proof}
\begin{remark}
By the argument in Lemma \ref{lemma:bounded} we see that for any object $E_i$ in $\mathcal{D}_i$, for $i=1,2$, we have
\[
\mathcal{H}^n_{\mathcal{A}_i}(E_i)=\mathcal{H}^n_{\mathcal{A}}(E_i)
\]
for any $n\in \mathbb{Z}$, where $\mathcal{A}$ and $\mathcal{A}_i$ denote the hearts of the $t$-structures under consideration on $\mathcal{D}$ and $\mathcal{D}_i$, respectively, and $\mathcal{H}^n_{\mathcal{A}}\colon \mathcal{D}\to\mathcal{A}$ and $\mathcal{H}^n_{\mathcal{A}_i}\colon \mathcal{D}_i\to\mathcal{A}_i$ are the corresponding cohomology functors.
\end{remark}

\begin{definition}
Let $\mathcal{D}$ be a numerically bounded triangulated category with a Serre functor. We say that a triangulated subcategory $\mathcal{D}_{\mathrm{phan}}$ of $\mathcal{D}$ is a \emph{phantomic summand} if
\begin{enumerate}
    \item there exists an orthogonal decomposition $\mathcal{D}=\mathcal{D}_{\mathrm{body}}\oplus\mathcal{D}_{\mathrm{phan}}$;
    \item for every object $E$ in $\mathcal{D}_{\mathrm{phan}}$ one has $[E]=0$ in $K_{\mathrm{num}}(\mathcal{D})$. $K_{\mathrm{num}}(\mathcal{D}_{\mathrm{phan}})=0$.
\end{enumerate}
\end{definition}

\begin{lemma}\label{lemma:phantomic}
Let $\mathcal{D}$ be a numerically finite bounded triangulated category with a Serre functor with a numerical stability condition. Then $\mathcal{D}$ has no nonzero phantomic summands.
\end{lemma}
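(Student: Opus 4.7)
The plan is to argue by contradiction: assume there is a nonzero phantomic summand $\mathcal{D}_{\mathrm{phan}}$ in the orthogonal decomposition $\mathcal{D}=\mathcal{D}_{\mathrm{body}}\oplus\mathcal{D}_{\mathrm{phan}}$, and produce a nonzero object sitting in the heart $\mathcal{A}$ of the stability condition $\sigma=(\mathcal{A},Z)$ whose central charge is forced to be zero. This contradicts the positivity axiom of a stability condition, which requires $Z(F)\in \mathbb{H}\cup \mathbb{R}_{<0}$ (and in particular $Z(F)\neq 0$) for every nonzero $F\in \mathcal{A}$.

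First, I would exploit the orthogonal decomposition machinery developed just above. By Lemma~\ref{lemma:t-structure-on-decomposition} applied to the $t$-structure whose heart is $\mathcal{A}$, the pair
\[
\mathcal{D}_{\mathrm{phan};\leq 0}=\mathcal{D}_{\leq 0}\cap \mathcal{D}_{\mathrm{phan}},\qquad \mathcal{D}_{\mathrm{phan};\geq 0}=\mathcal{D}_{\geq 0}\cap \mathcal{D}_{\mathrm{phan}}
\]
is a $t$-structure on $\mathcal{D}_{\mathrm{phan}}$ whose heart is $\mathcal{A}_{\mathrm{phan}}=\mathcal{A}\cap \mathcal{D}_{\mathrm{phan}}$. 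By Lemma~\ref{lemma:bounded}, this induced $t$-structure is bounded. Hence any nonzero object $E\in \mathcal{D}_{\mathrm{phan}}$ has some nonvanishing cohomology object $F=\mathcal{H}^n_{\mathcal{A}_{\mathrm{phan}}}(E)\in \mathcal{A}_{\mathrm{phan}}$; in particular, the intersection $\mathcal{A}\cap \mathcal{D}_{\mathrm{phan}}$ contains a nonzero object $F$.

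Next, I would invoke the defining property of a stability condition on $\mathcal{D}$: since $F$ is a nonzero object in the heart $\mathcal{A}$, the central charge $Z(F)$ is nonzero. On the other hand, because the stability condition is \emph{numerical}, $Z$ factors through $K_{\mathrm{num}}(\mathcal{D})$, and by the very definition of phantomic summand the class $[F]\in K_{\mathrm{num}}(\mathcal{D})$ vanishes. This forces $Z(F)=0$, a contradiction; therefore no nonzero phantomic summand can exist.

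The argument is essentially a one-line observation once all the bookkeeping about induced $t$-structures is in place, so the only real technical point is ensuring that the induced $t$-structure on $\mathcal{D}_{\mathrm{phan}}$ really is bounded (so that a nonzero object produces a nonzero object in the heart). That is precisely what the preliminary lemmas \ref{lemma:t-structure-on-decomposition} and \ref{lemma:bounded} guarantee, and the rest of the proof is an immediate appeal to the positivity axiom combined with the numerical nature of $Z$.
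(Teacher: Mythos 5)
Your proof is correct and follows essentially the same route as the paper: invoke Lemmas~\ref{lemma:t-structure-on-decomposition} and~\ref{lemma:bounded} to get a bounded induced $t$-structure on $\mathcal{D}_{\mathrm{phan}}$ with heart $\mathcal{A}_{\mathrm{phan}}=\mathcal{A}\cap\mathcal{D}_{\mathrm{phan}}$, then derive a contradiction from the positivity of $Z$ on the heart versus the numerical triviality of classes in $\mathcal{D}_{\mathrm{phan}}$. The only cosmetic difference is that the paper argues $\mathcal{A}_{\mathrm{phan}}=0$ directly and then concludes $\mathcal{D}_{\mathrm{phan}}=0$ from boundedness, whereas you start from a hypothetical nonzero object of $\mathcal{D}_{\mathrm{phan}}$ and extract a nonzero cohomology object in $\mathcal{A}_{\mathrm{phan}}$ — these are two phrasings of the same step.
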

\begin{proof}
Let $\sigma=(\mathcal{A},Z)$ be a numerical stability condition on $\mathcal{D}$. Assume we have a phantomic summand
$\mathcal{D}_{\mathrm{phan}}$, and consider an orthogonal decomposition $\mathcal{D}=\mathcal{D}_{\mathrm{body}}\oplus\mathcal{D}_{\mathrm{phan}}$. By Lemmas \ref{lemma:t-structure-on-decomposition} and \ref{lemma:bounded}, $\mathcal{A}_{\mathrm{phan}}=\mathcal{A}\cap \mathcal{D}_{\mathrm{phan}}$ is the heart of a bounded $t$-structure on $\mathcal{D}_{\mathrm{phan}}$. If $E$ is a nonzero object in $\mathcal{A}_{\mathrm{phan}}$, then it is a nonzero object in $\mathcal{A}$ and so $Z(E)\neq 0$. On the other hand, if $E\in \mathcal{A}_{\mathrm{phan}}$, then $E\in\mathcal{D}_{\mathrm{phan}}$ and so $[E]=0$ in $K_{\mathrm{num}}(\mathcal{D}_{\mathrm{phan}})\subseteq K_{\mathrm{num}}(\mathcal{D})$. As the stability condition $\sigma$ is numerical, $Z\colon K(\mathcal{D})\to \mathbb{C}$ factors through $K_{\mathrm{num}}(\mathcal{D})$ and so $Z(E)=0$, a contradiction. This means that $\mathcal{A}_{\mathrm{phan}}=0$ and so $\mathcal{D}_{\mathrm{phan}}=0$.
\end{proof}

\begin{lemma}\label{lemma:i-is-equivalence}
Let $\mathcal{A}$ be and admissible subcategory of the triangulated category $\mathcal{B}$. If
\begin{enumerate}
\item $\mathcal{A}^\perp\subseteq {}^\perp\mathcal{A}$;
\item $\mathcal{B}$ is numerically finite and endowed with a Serre functor;
 \item $\mathcal{B}$ has a numerical stability condition;
    \item $K_{\mathrm{num}}(\mathcal{A})= K_{\mathrm{num}}(\mathcal{B})$;
   \end{enumerate}
then $\mathcal{A}=\mathcal{B}$.
\end{lemma}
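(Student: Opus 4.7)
The plan is to leverage the chain of results the authors have just built up: hypothesis (1) promotes the semiorthogonal decomposition coming from admissibility into a genuinely orthogonal decomposition, hypothesis (4) forces one of the summands to be numerically trivial, i.e., a phantomic summand, and hypothesis (3) together with Lemma \ref{lemma:phantomic} kills that summand. Nothing more clever than this should be needed; the work has already been done in the preceding lemmas.

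More concretely, since $\mathcal{A}$ is admissible in $\mathcal{B}$ we have the semiorthogonal decomposition $\mathcal{B} = \langle \mathcal{A}^\perp, \mathcal{A} \rangle$. Condition (1), combined with Lemma \ref{lemma:orthogonal}, upgrades this to an \emph{orthogonal} decomposition
\[
\mathcal{B} = \mathcal{A}^\perp \oplus \mathcal{A}.
\]
Applying the numerical counterpart of this decomposition (Remark \ref{rem:numerical-semiorthogonality}) yields $K_{\mathrm{num}}(\mathcal{B}) = K_{\mathrm{num}}(\mathcal{A}^\perp) \oplus K_{\mathrm{num}}(\mathcal{A})$ as an internal direct sum of free $\mathbb{Z}$-modules. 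Hypothesis (4) says the second summand already equals the whole thing, so $K_{\mathrm{num}}(\mathcal{A}^\perp) = 0$, and in particular every object of $\mathcal{A}^\perp$ has zero numerical class in $K_{\mathrm{num}}(\mathcal{B})$. This is precisely the statement that $\mathcal{A}^\perp$ is a phantomic summand of $\mathcal{B}$ in the sense of the definition preceding Lemma \ref{lemma:phantomic}.

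The conclusion is then immediate from Lemma \ref{lemma:phantomic}: since $\mathcal{B}$ is numerically finite, has a Serre functor, and carries a numerical stability condition (hypotheses (2) and (3)), it can have no nonzero phantomic summand. Hence $\mathcal{A}^\perp = 0$, and the orthogonal decomposition $\mathcal{B} = \mathcal{A}^\perp \oplus \mathcal{A}$ degenerates to $\mathcal{A} = \mathcal{B}$.

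There is no genuine obstacle here; the only points one has to be mildly careful about are verifying that hypothesis (1) of Lemma \ref{lemma:orthogonal} really applies (which is immediate, since $\mathcal{A}$ is admissible by assumption) and that we invoke Lemma \ref{lemma:phantomic} with the ambient category $\mathcal{B}$ rather than with the putative summand $\mathcal{A}^\perp$ — so the numerical stability condition is required only on $\mathcal{B}$, which is exactly what (3) provides. Thus the whole statement reduces to chaining Lemma \ref{lemma:orthogonal}, Remark \ref{rem:numerical-semiorthogonality}, and Lemma \ref{lemma:phantomic} in sequence.
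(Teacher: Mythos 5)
Your argument is correct and follows exactly the same chain of reasoning as the paper: Lemma \ref{lemma:orthogonal} upgrades the semiorthogonal decomposition to an orthogonal one, Remark \ref{rem:numerical-semiorthogonality} plus hypothesis (4) shows $K_{\mathrm{num}}(\mathcal{A}^\perp)=0$, so $\mathcal{A}^\perp$ is phantomic, and Lemma \ref{lemma:phantomic} then forces $\mathcal{A}^\perp=0$. This is precisely the paper's proof.
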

\begin{proof}
 By Lemma \ref{lemma:orthogonal}, we have an orthogonal decomposition $
\mathcal{B}\cong \mathcal{A}^{\perp}\oplus \mathcal{A}$, and so a direct sum decomposition $K_{\mathrm{num}}(\mathcal{B})=K_{\mathrm{num}}(\mathcal{A})\oplus K_{\mathrm{num}}(\mathcal{A}^{\perp})$, by Remark \ref{rem:numerical-semiorthogonality}. Our assumptions give $K_{\mathrm{num}}(\mathcal{A}^{\perp})=0$ and so $\mathcal{A}^\perp$ is phantomic.
By Lemma \ref{lemma:phantomic}, $\mathcal{A}^\perp=0$ and so $\mathcal{B}=\mathcal{A}$.
\end{proof}


\section{Fullness of the standard exceptional collection on $Q_3$}\label{sec:fullness}

\begin{prop}\label{prop:collection-is-full}
The exceptional collection $(S,\mathcal{O}_{Q_3},\mathcal{O}_{Q_3}(1),\mathcal{O}_{Q_3}(2))$ of ${\mathcal{D}^b}(Q_3)$ is full.
\end{prop}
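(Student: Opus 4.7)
The plan is to invoke Lemma \ref{lemma:i-is-equivalence} with $\mathcal{B} = \mathrm{Ku}(Q_3)$ and $\mathcal{A} = \langle S \rangle$, the triangulated subcategory of $\mathrm{Ku}(Q_3)$ generated by the spinor bundle. Once $\mathcal{A} = \mathcal{B}$ is established, the semiorthogonal decomposition
\[
{\mathcal{D}^b}(Q_3) = \langle \mathrm{Ku}(Q_3), \mathcal{O}_{Q_3}, \mathcal{O}_{Q_3}(1), \mathcal{O}_{Q_3}(2) \rangle
\]
refines to $\langle S, \mathcal{O}_{Q_3}, \mathcal{O}_{Q_3}(1), \mathcal{O}_{Q_3}(2) \rangle$, which is precisely the fullness statement.

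To apply the lemma I would check its four hypotheses in turn. Since $S$ is exceptional (by the preceding corollary) and $\mathrm{Ku}(Q_3)$ is equipped with a Serre functor (Lemma \ref{lemma:inverse-serre}), the subcategory $\langle S \rangle$ is admissible in $\mathrm{Ku}(Q_3)$; moreover $\mathrm{Ku}(Q_3)$ is numerically finite, and Proposition \ref{prop:stability-condition-on-ku} supplies it with a numerical stability condition. For the numerical equality, the computation preceding Lemma \ref{lemma:S-in-Ku} identifies $K_{\mathrm{num}}(\mathrm{Ku}(Q_3))$ with the rank-one lattice generated by $\mathrm{ch}(S) = 2 - H + \tfrac{1}{12}H^3$, while $\langle S \rangle \simeq {\mathcal{D}^b}(\mathrm{pt})$ yields $K_{\mathrm{num}}(\langle S \rangle) = \mathbb{Z}[S]$; these coincide.

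The non-routine hypothesis is the orthogonality $\langle S \rangle^\perp \subseteq {}^\perp \langle S \rangle$ inside $\mathrm{Ku}(Q_3)$, and here the point-object property from Lemma \ref{lemma:S-as-point-object} is decisive. Since $\mathbb{S}_{\mathrm{Ku}(Q_3)}(S) \cong S$, Serre duality in $\mathrm{Ku}(Q_3)$ gives, for every $X \in \mathrm{Ku}(Q_3)$ and every $n \in \mathbb{Z}$,
\[
\mathrm{Hom}_{\mathrm{Ku}(Q_3)}(X, S[n]) \cong \mathrm{Hom}_{\mathrm{Ku}(Q_3)}(S[n], \mathbb{S}_{\mathrm{Ku}(Q_3)}(X))^{\vee} = \mathrm{Hom}_{\mathrm{Ku}(Q_3)}(S, X[-n])^{\vee},
\]
where the second equality uses that $\mathbb{S}_{\mathrm{Ku}(Q_3)}(S) \cong S$ together with the adjunction rewriting of the duality pairing. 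Consequently any $X \in \langle S \rangle^\perp$ also lies in ${}^\perp \langle S \rangle$.

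With all four hypotheses verified, Lemma \ref{lemma:i-is-equivalence} forces $\langle S \rangle = \mathrm{Ku}(Q_3)$, completing the proof. The conceptual point, and the reason the argument is this short, is that the numerical stability condition on $\mathrm{Ku}(Q_3)$ rules out phantomic summands (Lemma \ref{lemma:phantomic}), so the coincidence of the numerical Grothendieck groups automatically propagates to a coincidence of the underlying categories; no further geometric input on $Q_3$ is needed.
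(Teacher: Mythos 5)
Your proof is correct and follows the same core strategy as the paper: verify the hypotheses of Lemma~\ref{lemma:i-is-equivalence} for $\langle S\rangle \subseteq \mathrm{Ku}(Q_3)$, using the point-object identity $\mathbb{S}_{\mathrm{Ku}(Q_3)}(S)\cong S$ together with Serre duality to obtain the orthogonality $\langle S\rangle^\perp\subseteq{}^\perp\langle S\rangle$, the rank-one lattice computation for $K_{\mathrm{num}}(\mathrm{Ku}(Q_3))=\mathbb{Z}[S]$, and Proposition~\ref{prop:stability-condition-on-ku} for the stability condition. The only difference is presentational: the paper reaches the same lemma indirectly by packaging the argument inside Proposition~\ref{prop:FM-equivalence}, a Fourier--Mukai statement about moduli spaces of Bridgeland point objects with $\mathcal{M}_v$ the one-point moduli space of the exceptional object $S$; the paper itself concedes this formulation is ``surely an overkill'' for the case at hand, so your direct application of Lemma~\ref{lemma:i-is-equivalence} is a legitimate shortcut, at the modest cost of not foregrounding the generalizable template the authors wish to emphasize. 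One small stylistic note: your Serre-duality chain $\mathrm{Hom}(X,S[n])\cong\mathrm{Hom}(S[n],\mathbb{S}(X))^\vee\cong\mathrm{Hom}(S,X[-n])^\vee$ works because $\mathbb{S}^{-1}(S)\cong S$ lets you push $\mathbb{S}$ off the second slot, but it reads more cleanly in the form the paper uses, $\mathrm{Hom}(X,S[n])=\mathrm{Hom}(X,\mathbb{S}(S)[n])\cong\mathrm{Hom}(S[n],X)^\vee=0$, which applies $\mathbb{S}(S)\cong S$ first and then Serre duality once.
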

\begin{proof}
We have to show that $\langle S,\mathcal{O}_{Q_3},\mathcal{O}_{Q_3}(1),\mathcal{O}_{Q_3}(2)\rangle$ is a semiorthogonal decomposition of ${\mathcal{D}^b}(Q_3)$. By definition of ${\mathrm{Ku}}(Q_3)$, we have a semorthogonal decomposition $\langle {\mathrm{Ku}}(Q_3),\mathcal{O}_{Q_3},\allowbreak\mathcal{O}_{Q_3}(1),\mathcal{O}_{Q_3}(2)\rangle$, and we know from Lemma \ref{lemma:S-in-Ku} that $S\in \mathrm{Ku}(Q_3)$. Therefore all we have to show is that the inclusion $\langle S\rangle \hookrightarrow \mathrm{Ku}(Q_3)$ is an equivalence. Since $\langle S\rangle \hookrightarrow \mathrm{Ku}(Q_3)$ induces an isomorphism between $K_{\mathrm{num}}(\langle S\rangle)$ and $K_{\mathrm{num}}(\mathrm{Ku}(Q_3))$, and since we have a stability condition on $\mathrm{Ku}(Q_3)$ by Proposition \ref{prop:stability-condition-on-ku}, this is a particular case of Proposition \ref{prop:FM-equivalence} below, which is in turn a version of \cite[Theorem 5.4]{Bri98}.
\end{proof}

\begin{prop}\label{prop:FM-equivalence}
Let $X$ be an smooth projective variety, and let $\mathcal{K}$ be an admissible triangulated subcategory of ${\mathcal{D}^b}(X)$ endowed with a numerical stability condition. Assume there exists a $d$-dimensional smooth projective variety $\mathcal{M}_v$ which is a fine moduli space of $d$-codimensional Bridgeland point objects in $\mathcal{A}_{\mathcal{K}}\subseteq\mathcal{K}$ of numerical class $v$ \sout{phase $\lambda$} and let $\mathcal{E}\in {\mathcal{D}^b}(\mathcal{M}_v\times X)$ be the corresponding universal family. 
If for every two nonismorphic objects $E_{y_1}$ and $E_{y_2}$ parametrised by $\mathcal{M}_v$ one has
\begin{equation*}
\mathrm{Hom}_{{\mathcal{D}^b}(X)}(E_{y_1},E_{y_2}[n])=0, \qquad\text{for $0<n<d$},
\end{equation*}
then the Fourier-Mukai transform $\Phi^{\mathcal{E}}_{\mathcal{M}\to X}$ induces an equivalence $\Phi\colon {\mathcal{D}^b}(\mathcal{M}_v)\to \mathcal{K}$ if and only if it induces an isomorphism $\Phi_{\mathrm{num}}\colon K_{\mathrm{num}}(\mathcal{M}_v)\to K_{\mathrm{num}}(\mathcal{K})$.
\end{prop}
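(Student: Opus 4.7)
The plan is to prove the nontrivial ``if'' direction; the ``only if'' is immediate since any equivalence of triangulated categories induces an isomorphism of numerical Grothendieck groups.

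First, we invoke Proposition \ref{prop:fully-faithful}, whose hypotheses are literally those assumed here, to conclude that $\Phi=\Phi^{\mathcal{E}}_{\mathcal{M}_v\to X}$ is fully faithful as a functor ${\mathcal{D}^b}(\mathcal{M}_v)\to {\mathcal{D}^b}(X)$. Since $\Phi(\mathcal{O}_y)=E_y\in \mathcal{A}_{\mathcal{K}}\subseteq \mathcal{K}$ and $\Phi$ is triangulated, its essential image $\mathcal{A}$ is a full triangulated subcategory of $\mathcal{K}$. Being the image of a Fourier--Mukai transform with both adjoints, $\mathcal{A}$ is admissible in ${\mathcal{D}^b}(X)$, and by Remark \ref{rem:admissible-3} it is then admissible in $\mathcal{K}$ as well.

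Next, we apply Lemma \ref{lemma:i-is-equivalence} to the inclusion $\mathcal{A}\hookrightarrow \mathcal{K}$ to force $\mathcal{A}=\mathcal{K}$. Three of its hypotheses are immediate: $\mathcal{K}$ is numerically finite with a Serre functor, it carries a numerical stability condition by assumption, and the equality $K_{\mathrm{num}}(\mathcal{A})=K_{\mathrm{num}}(\mathcal{K})$ as subgroups of $K_{\mathrm{num}}(\mathcal{K})$ follows from the hypothesis that $\Phi_{\mathrm{num}}$ is an isomorphism, combined with the isomorphism $K_{\mathrm{num}}(\mathcal{M}_v)\xrightarrow{\sim}K_{\mathrm{num}}(\mathcal{A})$ induced by the fully faithful $\Phi$.

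The main obstacle is the remaining orthogonality condition $\mathcal{A}^\perp\subseteq {}^\perp\mathcal{A}$. By Serre duality in $\mathcal{K}$, for $F\in \mathcal{A}^\perp$ and $G\in \mathcal{A}$ one has $\mathrm{Hom}_{\mathcal{K}}(F,G)\cong \mathrm{Hom}_{\mathcal{K}}(G,\mathbb{S}_{\mathcal{K}}(F))^\vee$, so the condition reduces to showing $\mathbb{S}_{\mathcal{K}}(\mathcal{A})\subseteq \mathcal{A}$. On skyscrapers this is immediate from the point-object property $\mathbb{S}_{\mathcal{K}}(E_y)=E_y[d]$, which moreover matches $\Phi(\mathbb{S}_{\mathcal{M}_v}(\mathcal{O}_y))$ since $\mathbb{S}_{\mathcal{M}_v}(\mathcal{O}_y)=\mathcal{O}_y[d]$. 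The canonical natural transformation $\Phi\circ \mathbb{S}_{\mathcal{M}_v}\Rightarrow \mathbb{S}_{\mathcal{K}}\circ \Phi$ built from the adjunction $(\Phi,\Phi^R)$ is therefore an isomorphism on the spanning class $\{\mathcal{O}_y\}_{y\in \mathcal{M}_v}$, and by a standard argument in the spirit of \cite[Theorem~5.4]{Bri98} this extends to a natural isomorphism of functors, giving $\mathbb{S}_{\mathcal{K}}(\mathcal{A})\subseteq \mathcal{A}$. With all hypotheses verified, Lemma \ref{lemma:i-is-equivalence} yields $\mathcal{A}=\mathcal{K}$, so $\Phi$ is an equivalence onto $\mathcal{K}$.
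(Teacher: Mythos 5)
Your overall strategy --- invoke Proposition \ref{prop:fully-faithful} for fully faithfulness, then use Lemma \ref{lemma:i-is-equivalence} to upgrade to an equivalence --- is the same as the paper's, but two steps need repair.

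First, the claim that the essential image $\mathcal{A}$ of $\Phi$ lands inside $\mathcal{K}$ is not justified by ``$\Phi(\mathcal{O}_y)=E_y\in\mathcal{K}$ and $\Phi$ is triangulated.'' The skyscraper sheaves $\{\mathcal{O}_y\}$ form a \emph{spanning} class of ${\mathcal{D}^b}(\mathcal{M}_v)$, not a set that generates it under shifts, cones and direct summands (for $\dim\mathcal{M}_v>0$, no line bundle arises this way). The paper fills this gap by using the admissibility of $\mathcal{K}$: for $G\in{}^\perp\mathcal{K}$ it shows $(\Phi^{\mathcal{E}}_{\mathcal{M}_v\to X})^L(G)=0$ by testing against $\{\mathcal{O}_y\}$, and then deduces $\mathrm{Hom}(G,\Phi^{\mathcal{E}}_{\mathcal{M}_v\to X}(F))=\mathrm{Hom}((\Phi^{\mathcal{E}}_{\mathcal{M}_v\to X})^L(G),F)=0$ for all $F$. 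You need an argument of this kind; the factorization through $\mathcal{K}$ is not formal.

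Second, your treatment of the orthogonality hypothesis $\mathcal{A}^\perp\subseteq{}^\perp\mathcal{A}$ diverges from the paper's and, as written, does not close. Serre duality in the form $\mathrm{Hom}_{\mathcal{K}}(F,G)\cong\mathrm{Hom}_{\mathcal{K}}(G,\mathbb{S}_{\mathcal{K}}F)^\vee$ does not reduce the problem to $\mathbb{S}_{\mathcal{K}}(\mathcal{A})\subseteq\mathcal{A}$: what it reduces to is $\mathbb{S}_{\mathcal{K}}(\mathcal{A}^\perp)\subseteq\mathcal{A}^\perp$, which by Serre duality again is \emph{equivalent} to the very statement $\mathcal{A}^\perp\subseteq{}^\perp\mathcal{A}$ being proved; the other form of duality $\mathrm{Hom}(F,G)\cong\mathrm{Hom}(\mathbb{S}_{\mathcal{K}}^{-1}G,F)^\vee$ reduces it to $\mathbb{S}_{\mathcal{K}}^{-1}(\mathcal{A})\subseteq\mathcal{A}$. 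Moreover, promoting ``the canonical natural transformation $\Phi\circ\mathbb{S}_{\mathcal{M}_v}\Rightarrow\mathbb{S}_{\mathcal{K}}\circ\Phi$ is an isomorphism on $\{\mathcal{O}_y\}$'' to an isomorphism of functors is exactly the hard point: the cone of the counit $\Phi\Phi^R\Rightarrow\mathrm{id}$ applied to $\mathbb{S}_{\mathcal{K}}\Phi(-)$ vanishes for all inputs precisely when $\mathbb{S}_{\mathcal{K}}(\mathcal{A})\subseteq\mathcal{A}$, so invoking this step without a genuine spanning-class-for-natural-transformations lemma risks circularity. The paper's route is more elementary and avoids this: for $F\in\mathcal{F}^\perp$ and the point objects $E_y$, Serre duality and $\mathbb{S}_{\mathcal{K}}(E_y)=E_y[d]$ give directly $\mathrm{Hom}_{\mathcal{K}}(F,E_y[k])=\mathrm{Hom}_{\mathcal{K}}(E_y[k-d],F)^\vee=0$, hence $\mathcal{F}^\perp\subseteq{}^\perp\{E_y[k]\}={}^\perp\mathcal{F}$ using the spanning class property together with the left adjoint to $\mathcal{F}\hookrightarrow\mathcal{K}$. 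You should adopt something like this more hands-on argument, or carefully cite a precise statement allowing the natural-transformation conclusion.
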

\begin{proof}
Since $\mathcal{M}_v$ is a moduli space of objects in $\mathcal{K}\subseteq {\mathcal{D}^b}(X)$, the Fourier-Mukai transform $\Phi^{\mathcal{E}}_{\mathcal{M}_v\to X}$ factors as
\[
\xymatrix{
{\mathcal{D}^b}(\mathcal{M}_v)\ar[r]^-{\Phi}\ar@/^1.5pc/[rr]^{\Phi^{\mathcal{E}}_{\mathcal{M}_v\to X}}&\mathcal{K}\ar[r]^-{j}&{\mathcal{D}^b}(X),
}
\]
where $j\colon \mathcal{K}\to {\mathcal{D}^b}(X)$ is the inclusion. Namely,  as $\mathcal{K}$ is admissible, to show that $\Phi^{\mathcal{E}}_{\mathcal{M}_v\to X}(F)\in \mathcal{K}$ for every $F\in \mathcal{D}^b(\mathcal{M}_v)$ we only need to show that 
\[
\mathrm{Hom}_{\mathcal{D}^b(X)}(G,\Phi^{\mathcal{E}}_{\mathcal{M}_v\to X}(F))=0
\]
for any $G\in {}^\perp\mathcal{K}$. The Fourier-Mukai transform $\Phi^{\mathcal{E}}_{\mathcal{M}_v\to X}$ has both a left and a right adjoint, see \cite[Lemma 4.5]{Bri98}, so
\[
\mathrm{Hom}_{\mathcal{D}^b(X)}(G,\Phi^{\mathcal{E}}_{\mathcal{M}_v\to X}(F))=
\mathrm{Hom}_{\mathcal{D}^b(\mathcal{M}_v)}((\Phi^{\mathcal{E}}_{\mathcal{M}_v\to X})^L(G),F)
\]
But $(\Phi^{\mathcal{E}}_{\mathcal{M}_v\to X})^L(G)=0$ for any $G\in {}^\perp\mathcal{K}$. Indeed, for any $y\in \mathcal{M}_v$ we have
\begin{align*}
\mathrm{Hom}_{\mathcal{D}^b(\mathcal{M}_v)}((\Phi^{\mathcal{E}}_{\mathcal{M}_v\to X})^L(G),\mathcal{O}_y)&= 
\mathrm{Hom}_{\mathcal{D}^b(X)}(G,\Phi^{\mathcal{E}}_{\mathcal{M}_v\to X}(\mathcal{O}_y))\\
&=\mathrm{Hom}_{\mathcal{D}^b(X)}(G,E_y)\\
&=0,
\end{align*}
since for every $y\in \mathcal{M}_v$ we have $E_y\in\mathcal{K}$. As $\{\mathcal{O}_y\}_{y\in \mathcal{M}_v}$ is a spanning class for $\mathcal{D}^b(\mathcal{M}_v)$ this implies $(\Phi^{\mathcal{E}}_{\mathcal{M}_v\to X})^L(G)=0$ and so $\Phi^{\mathcal{E}}_{\mathcal{M}_v\to X}(F)\in \mathcal{K}$.

Clearly, if $\Phi$ is an equivalence, then it induces an isomorphism between $K_{\mathrm{num}}(\mathcal{M}_v)$ and $K_{\mathrm{num}_v}(\mathcal{K})$. Vice versa, assume that $\Phi$ induces an isomorphism between $(K_{\mathrm{num}}(\mathcal{M}_v),\chi_{{\mathcal{D}^b}(\mathcal{M}_v)})$ and $K_{\mathrm{num}}(\mathcal{K}),\chi_{\mathcal{K}})$, and
denote by $\mathcal{F}$ the essential image of $\Phi^{\mathcal{E}}_{\mathcal{M}_v\to X}$ in $\mathcal{D}^b(X)$. As
$\Phi^{\mathcal{E}}_{\mathcal{M}_v\to X}$ is fully faithful by Proposition \ref{prop:fully-faithful}, we obtain the diagram
\[
\mathcal{D}^b(\mathcal{M}_v)\xrightarrow[\raisebox{4pt}{$\sim$}]{\Phi} \mathcal{F}\subseteq \mathcal{K}\subseteq \mathcal{D}^b(X)
\]
and we are reduced to showing that $\mathcal{F}=\mathcal{K}$. 
The subcategory $\mathcal{K}$ is admissible by assumption, and $\mathcal{F}$ is admissible as the Fourier-Mukai transform $\Phi^{\mathcal{E}}_{\mathcal{M}_v\to X}$ has both a left and a right adjoint, see \cite[Lemma 4.5]{Bri98}. Therefore $\mathcal{F}$ is an admissible subcategory of $\mathcal{K}$, by Remark \ref{rem:admissible-3}. Denote by ${}^\perp \mathcal{F}$ and by $\mathcal{F}^\perp$ the left and right orthogonal of $\mathcal{F}$ in $\mathcal{K}$, respectively. As we have a numerical stability condition on $\mathcal{K}$ and $K_{\mathrm{num}}(\mathcal{F})=K_{\mathrm{num}}(\mathcal{K})$, in order to apply Lemma \ref{lemma:i-is-equivalence} we only need to show that $\mathcal{F}^\perp\subseteq {}^\perp \mathcal{F}$. Let therefore $F$ be an object in $\mathcal{F}^\perp$. As $E_y$ is a $d$-codimensional point object in $\mathcal{K}$ for every $y\in \mathcal{M}_v$ we have, for every integer $k$ and for every point $y$ in $\mathcal{M}$,
\begin{align*}
\mathrm{Hom}_{\mathcal{K}}(F,E_y[k])=\mathrm{Hom}_{\mathcal{K}}(F,\mathbb{S}_{\mathcal{K}}E_y[k-d])
=\mathrm{Hom}_{\mathcal{K}}(E_y[k-d],F)^\vee=0.
\end{align*}
Therefore, 
\[\mathcal{F}^\perp
\subseteq {}^\perp\{E_y[k]\}_{y\in \mathcal{M}_v,k\in\mathbb{Z}}.
\]
Since $\{\mathcal{O}_y\}{y\in \mathcal{M}_v}$ is a spanning class for ${\mathcal{D}^b}(\mathcal{M}_v)$
and $\Phi\colon {\mathcal{D}^b}(\mathcal{M}_v)\xrightarrow{\sim} \mathcal{F}$ is an equivalence, $\{E_y\}_{y\in \mathcal{M}_v}=\{\Phi(\mathcal{O}_y\}_{y\in \mathcal{M}_v}$ is a spanning class for $\mathcal{F}$ and so \allowbreak
${}^\perp\{E_y[k]\}_{y\in \mathcal{M}_v,k\in\mathbb{Z}}={}^\perp\mathcal{F}$.
\end{proof}

\begin{remark}
Proposition \ref{prop:FM-equivalence} can be used to show that the Kuznetsov component $\mathrm{Ku}(Y_4)$ is equivalent to the derived category of a genus 2 curve. Namely, all the proofs given in the present article for the spinor bundle $S$ on $Q_3$ more or less verbatim apply to the spinor bundles on $Y_4$, to show that they are 1-codimensional Bridgeland stable objects in $\mathrm{Ku}(Y_4)$ with respect to the stability condition induced by $(\mathrm{Coh}(Y_4)_{\alpha,-\frac{1}{2}}^0,Z_{\alpha,-\frac{1}{2}}^0)$. The only additional check one needs, besides the properties of spinor bundles on quadrics recalled or derived above, is showing that the spinor bundles (and, more generally, spinor sheaves) on $Y_4$ are objects of the Kuznetsov component  $\mathrm{Ku}(Y_4)$. Since $\mathrm{Ku}(Y_4)$ is defined as the right orthogonal to the exceptional collection $(\mathcal{O},\mathcal{O}(1))$ in ${\mathcal{D}^b}(Y_4)$, an argument like that of Lemma \ref{lemma:S-in-Ku} proves that this is equivalent to showing that
$H^n(Y_4;S(-i))=0$ for $i\in\{0,1\}$ and every $n\in \mathbb{Z}$. By definition, $S$ is the restriction to $Y_4$ of a spinor bundle over a 4-dimensional quadric $Q_4$ containing $Y_4$ (more generally, of spinor sheaf when $Q_4$ is singular), and the required cohomological vanishing is then immediate from the short exact sequence
\[
0\to S_{Q_4}(-2) \to S_{Q_4}\to S\to 0
\]
and from the homological vanishing for $S_{Q_4}(-i)$.
The genus 2 curve arises as the moduli space of such sheaves. This is classical and due to Bondal-Orlov \cite{bondal-orlov}: for every point $p$ in the pencil of 4-dimensional quadrics containing $Y_4$ corresponding to a smooth quadric $Q_{4;p}$ one has two non isomorphic spinor bundles $S^+_{Q_{4,p}},S^-_{Q_{4,p}}$ whose restriction to $Y_4$ gives two nonisomorphic spinor bundles on $Y_4$. This realizes the moduli space of spinor bundles on $Y_4$ as a double cover of $\mathbb{P}^1$ ramified over the 6 points corresponding to singular quadrics in the pencil, i.e., as an hyperelliptic genus 2 curve. Results of Section \ref{section:point-objects} identify this moduli space with the moduli space of Bridgeland stable objects in $\mathrm{Coh}^0_{\alpha,-\frac{1}{2}}(Y_4)[-1]\cap \mathrm{Ku}(Y_4)$ with Chern character $2-H+\frac{1}{12}H^3$.
\end{remark}

\begin{remark}
The only missing ingredient in order to generalize Proposition \ref{prop:collection-is-full} to an arbitrary $n$-dimensional quadric hypersurface $Q_n$ is the existence of a numerical stability condition on the residual category of the exceptional collection $(\mathcal{O}_{Q_n},\mathcal{O}_{Q_n}(1),\dots,\mathcal{O}_{Q_n}(n-1))$ in ${\mathcal{D}^b}(Q_n)$. Of course, one knows from \cite{kapranov} that one has a full exceptional collection 
\begin{equation}\label{eq:Kapranov-even}
    (S^+,S^{-},\mathcal{O}_{Q_n},\mathcal{O}_{Q_n}(1),\dots,\mathcal{O}_{Q_n}(n-1))
\end{equation}
for even $n$ and 
\begin{equation}\label{eq:Kapranov-odd}
    (S,\mathcal{O}_{Q_n},\mathcal{O}_{Q_n}(1),\dots,\mathcal{O}_{Q_n}(n-1))
\end{equation}
for odd $n$, so that $\mathrm{Ku}(Q_n)$ is equivalent to the derived category of one or of two points depending on the parity of $n$, and so it surely has numerical stability conditions. But if one were able to prove a priori, i.e., without using the knowledge of the full excepional collections (\ref{eq:Kapranov-even}-\ref{eq:Kapranov-odd}), that there existed a numerical stability condition on $\mathrm{Ku}(Q_n)$, one could use the constructions in this article to rediscover the full exceptional collections (\ref{eq:Kapranov-even}-\ref{eq:Kapranov-odd}). 
\end{remark}

\begin{remark}
Fano threefolds of Picard rank 1 are known to have stability conditions (\cite{li:stability-conditions}, preceeded by \cite{macri12} for the particular case of $\mathbb{P}^3$ and by \cite{schmidt13} for the case of the quadric threefold $Q_3$ considered in this article). We preferred not to use this stronger result and only use the weak stability condition (\ref{eq:weak-stability-function}) on ${\mathcal{D}^b}(Q_3)$ to induce a stability condition on $\mathrm{Ku}(Q_3)$ following \cite{BLMS}, as only a stability condition on the residual category is relevant to the constructions in this article, and weak stability conditions on the ambient category are much easier to have with respect to actual stability conditions. For instance, (\ref{eq:weak-stability-function}) defines a weak stability condition on every smooth projective variety, regardless of its dimension. It is therefore reasonable to conjecture that, in principle, proving the existence of a numerical stability condition  on a residual category of the form $\mathrm{Ku}(X)$ should be possible in many cases, even without the presence of a true stability condition on ${\mathcal{D}^b}(X)$. Hopefully, this should apply for instance to the residual components $\mathrm{Ku}(Q_n)$ of smooth quadrics.
\end{remark}

\begin{remark}
Another possible application of Proposition \ref{prop:FM-equivalence} is the follwing. Let $W_4$ be a smooth cubic fourfold, and let $v$ be an indecomposable numerical class in $K_{\mathrm{num}}^+(W_4)$ with $\chi_{W_4}(v,v)=0$. Let $\sigma$ be a numerical stability condition on $\mathrm{Ku}(W_4)$. Since $\mathrm{Ku}(W_4)$ is a 2-Calabi-Yau category, i.e., $\mathbb{S}_{\mathrm{Ku}(W_4)}=[2]$, see, e.g., \cite{Kuz15}, we see that any $\sigma$-semistable object in $\mathcal{A}_{\mathrm{Ku}(W_4)}$ is a codimension 2 Bridgeland point object in $\mathrm{Ku}(W_4)$. If a fine moduli space $\mathcal{M}_v$ for $\sigma$-semistable object in $\mathcal{A}_{\mathrm{Ku}(W_4)}$ exists which is a projective variety, it will be  2-dimensional. Indeed, the numerical condition $\chi_{W_4}(v,v)=0$ and Serre duality for $\mathrm{Ku}(W_4)$ give
\[
\mathrm{hom}(E,E[1])=\mathrm{hom}(E,E)+\mathrm{hom}(E,E[2])=2 \mathrm{hom}(E,E)=2,
\]
for any Bridgeland point object of numerical class $v$, where we wrote $\mathrm{hom}(F,G)$ for the dimension of the hom-space $\mathrm{Hom}_{\mathrm{Ku}(W_4)}(F,G)$. So, by Remark \ref{rem:addington-thomas} and Proposition \ref{prop:FM-equivalence}, if such a  $\mathcal{M}_v$ exists then we have an equivalence
\[
\mathcal{D}^b(\mathcal{M}_v)\cong \mathrm{Ku}(W_4).
\]
In particular, $\mathcal{M}_v$ will be a K3 surface. See \cite{addington-thomas,huybrechts,BLMNPS} for additional information on this result.
\end{remark}

\end{document}